\documentclass[pdflatex,sn-mathphys-num]{sn-jnl}
\usepackage{graphicx}
\usepackage{amsmath,amssymb,amsfonts}
\usepackage{amsthm}
\usepackage{pgfplots}
\pgfplotsset{compat=newest}

\theoremstyle{thmstyleone}
\newtheorem{theorem}{Theorem}[section]
\newtheorem{proposition}[theorem]{Proposition}
\newtheorem{lemma}[theorem]{Lemma}

\theoremstyle{thmstyletwo}

\newtheorem{remark}{Remark}
\newtheorem{assumption}{Assumption}

\theoremstyle{thmstylethree}

\newcommand{\rme}{\mathrm{e}}
\newcommand{\rmi}{\mathrm{i}}
\newcommand{\RR}{\mathbb{R}}
\newcommand{\CC}{\mathbb{C}}

\begin{document}

\title[Analysis of split exponential
  integrators for semilinear problems
  with noncommuting operators]{Analysis of split exponential
  integrators for semilinear problems
  with noncommuting operators}

\author[1]{\fnm{Luca} \sur{Battestini}}\email{luca.battestini@unitn.it}

\author[2]{\fnm{Marco} \sur{Caliari}}\email{marco.caliari@univr.it}

\author[3]{\fnm{Fabio} \sur{Cassini}}\email{fabio.cassini@unipd.it}

\affil[1]{\orgdiv{Department of Mathematics}, \orgname{University of Trento},
\country{Italy}}

\affil[2]{\orgdiv{Department of Computer Science},
  \orgname{University of Verona},
\country{Italy}}

\affil[3]{\orgdiv{Department of Mathematics ``Tullio Levi-Civita''},
  \orgname{University of Padua}, \country{Italy}}

\abstract{Splitting the exponential and
  exponential-like functions can significantly reduce
  the overall computational cost of exponential integrators,
  providing a strong motivation for their rigorous analysis.
  In this paper, we analyze splitting errors for two noncommuting
  unbounded operators and derive new 
  representations that include the popular Lie--Trotter and Strang cases.
  Moreover, within the abstract framework of
  strongly continuous semigroups, we prove the convergence of a split
  version of the exponential Euler method and a second-order exponential
  Runge--Kutta method for semilinear problems.
  Numerical experiments on a nonlinear dispersive
  equation and   a diffusion--reaction equation demonstrate the theoretical findings,
  stability properties, and efficiency of the proposed
  split exponential integrators compared
  to the state of the art.}

\keywords{Noncommuting unbounded operators, Exponential integrators,
  Stiff equations, Splitting errors, Semilinear problems}
\pacs[Mathematics Subject Classification 2020]{65M12, 65L70, 65L04, 65J08}

\maketitle

\section{Introduction}\label{sec:introduction}

We consider the abstract evolution problem
\begin{equation}\label{eq:pde}
    \begin{cases}
        \begin{aligned}
            u'(t) &= Lu(t) + g(u(t))=(A+B)u(t) + g(u(t)),\quad t\in(0,T], \\
            u(0) &= u_0,
        \end{aligned}
    \end{cases} 
\end{equation} 
where $u$ is the unknown, $A$ and $B$ are  linear noncommuting
unbounded operators, and $g$ is the
nonlinear term.
The precise assumptions on the operators, nonlinearity, and solution will
be given in the following section.
Typically, 
the analytical solution of~\eqref{eq:pde} cannot be obtained in explicit
form, and therefore it is important to develop numerical
methods for computing an approximate one. In many applications, 
problems in the form~\eqref{eq:pde} arise from the spatial discretization of
semilinear Partial Differential Equations (PDEs), like
advection--diffusion--reaction equations or nonlinear Schr\"odinger
equations. In these cases, the operators are matrices of arbitrary
large norm representing the discretization of differential
operators with their boundary conditions.
The resulting system of Ordinary Differential Equations (ODEs)
is therefore \emph{stiff}, and standard explicit time-marching
schemes require small time steps to work properly.
For these problems, where the linear part is responsible of the stiffness,
exponential integrators of Runge--Kutta type
(see \cite{HO10} for a seminal review)
have proved successful,
since they are explicit and don't suffer of step size restrictions.
However,
they require exponential-like matrix functions, denoted by
$\varphi_\ell$, evaluated at scales of $L=A+B$, whose computation can be an expensive task.
A promising
way to reduce the computational burden is by employing suitable
splitting approximations of $\varphi_\ell(\tau L)$, being $\tau$
the time step size, involving  $\varphi_\ell(\tau A)$ and
$\varphi_\ell(\tau B)$ separately. Indeed, this approach can be more efficient
in many situations,
e.g., when leveraging matrix- or tensor-based approaches that exploit
the spatial structure of a multidimensional problem. For applications to directional splitting
in hyperrectangular spatial domains we invite the reader
to consult~\cite{C24,CC24,CC24bis},
and \cite{CC27} for a recent extension to curvilinear domains.
The convergence analysis of
the resulting split exponential integrators in the context
of parabolic problems with smooth solutions
has been recently studied in the case of \emph{commuting}
operators in \cite{CCEO26}, and a generalization to nonsmooth solutions has
been presented in~\cite{HOZ26}.

In this paper, we are interested in the convergence analysis
of two split exponential integrators of order one and two,
namely ERK1L and ERK2L, that are based on the aforementioned splitting
approximations.
To this aim, we derive new error formulas
for the splitting of $\varphi_\ell$ functions
in the case of \emph{noncommuting} operators, including
the well-known Lie--Trotter and Strang splittings for the exponential
function ($\ell=0$).
More in detail, in Section~\ref{sec:methods}, we introduce new elementary
representations of the splitting errors in the abstract framework of
strongly continuous semigroups. In addition, we analyze the proposed
split versions of the first-order exponential Euler method ETD1RK and of
the second-order exponential Runge--Kutta method ETD2RK.
In Section~\ref{sec:examples}, we present some numerical examples, where
we show the effectiveness of the split integrators,
thanks to the employment of the fast Fourier transform and a matrix-oriented
approach. Also, we highlight
their favourable stability properties, and compare their
performances with unsplit methods. Finally, in
Section~\ref{sec:conclusions} we draw the conclusions and discuss possible
future developments.

\section{Splitting approximations and
  exponential integrators}\label{sec:methods}

In this section, we present two split exponential integrators for the solution
of \eqref{eq:pde}. For each scheme, we analyze the employed
splitting approximations and
present, under suitable assumptions, the convergence analysis.
All the results are obtained in the abstract framework of semigroups. In the
following, the time interval $[0,T]$ is
uniformly discretized by the time steps $t_n=n\tau$, where $\tau$
is the constant time step size. We start by stating the following
assumption on the operators $A$, $B$, and $L=A+B$.

\begin{assumption}\label{assumption:semigroup}
Let $(X,\lVert\cdot\rVert)$ be a Banach space. The
operators $A$, $B$, and $L=A+B$, defined on their respective domains
and taking values in $X$, generate strongly continuous semigroups
$\{\rme^{tA}\}_{t\geq 0}$,
$\{\rme^{tB}\}_{t\geq 0}$, and $\{\rme^{tL}\}_{t\geq 0}$, respectively.
\end{assumption} 
Consequently, the following bounds hold true:
\begin{equation}\label{eq:sgbound}
    \lVert\rme^{tA}\rVert \leq C,\quad \lVert\rme^{tB}\rVert \leq C,\quad
    \lVert\rme^{tL}\rVert \leq C,    \quad 0\leq t \leq T.
\end{equation}
Throughout the paper, we denote by $C$ a general constant that is independent
of the stiffness of the problem. From the bounds in \eqref{eq:sgbound},
it follows that
\begin{equation}\label{eq:phibound}
  \lVert\varphi_\ell(tA)\rVert\leq C,\quad
  \lVert\varphi_\ell(tB)\rVert\leq C,\quad
  \lVert\varphi_\ell(tL)\rVert\leq C,
  \quad 0\leq t \leq T,\quad \ell\ge0,
\end{equation}
where the $\varphi_\ell$ functions are defined by
\begin{equation}\label{eq:phifun}
    \varphi_0(z)=\rme^z,\quad \varphi_k(z)
    =\frac{1}{(k-1)!}\int_0^1 \theta^{k-1}\rme^{(1-\theta)z}d\theta 
    = \sum_{i=0}^{\infty}\frac{z^i}{(k+i)!},\quad k>0,
\end{equation}
and satisfy the recurrence relation
\begin{equation}\label{eq:phirel}
    \varphi_{\ell}(z) -\frac{1}{\ell!}=\varphi_{\ell+1}(z)z,\quad \ell\geq 0.
\end{equation}
We emphasize that we are interested in the case
of two noncommuting operators, hence
\begin{equation*}
    \rme^{tL}\neq\rme^{tA}\rme^{tB},
\end{equation*}
and, obviously,
\begin{equation*}
    \varphi_k(tL)\neq\varphi_k(tA)\varphi_k(tB), \quad k >0.
\end{equation*}
\subsection{Split exponential integrator ERK1L}\label{ssec:serk1}
We start by considering the exponential Euler method \cite[formula (1.6)]{HO10}
\begin{equation}\label{eq:eeg}
    \tilde u_{n+1} = \rme^{\tau L}\tilde u_n+\tau\varphi_1(\tau L)g(\tilde u_n),
\end{equation}
also known as ETD1RK.
As mentioned before, we want to approximate the exponential and the
$\varphi_1$ function evaluated at $\tau L$, deriving a first-order
scheme that can be
efficiently implemented. To this aim, we consider the approximation
\begin{equation}\label{eq:phisplit}
    \varphi_\ell(\tau L)\approx\ell!\varphi_\ell(\tau A)\varphi_\ell(\tau
    B),\quad \ell\geq 0,
\end{equation}
first introduced in \cite{CC24} and analyzed in the \emph{commuting} case
in \cite{CCEO26}. 
In the case of bounded operators,
it is straightforward to observe that the error
of this approximation is proportional to $\tau^2$
by exploiting the Taylor expansion of the $\varphi_\ell$ function.
Therefore, under suitable assumptions, we expect that
 \eqref{eq:phisplit} could be successfully employed
in our stiff framework. In fact, we propose
the split exponential Euler method 
\begin{equation}\label{eq:erk1l}
    u_{n+1} = \rme^{\tau A}\rme^{\tau B}u_n+\tau\varphi_1(\tau A)\varphi_1(\tau B)
    g(u_n),
\end{equation}
which we call ERK1L. Notice that, differently from the commuting case,
changing the order of the operators in the formula above would
lead to different schemes.

To prove the convergence of the scheme, we first analyze
splitting \eqref{eq:phisplit}, deriving an exact expression for the
error.
\begin{lemma}\label{lemma:phisplit}
  Let $\ell\ge 0$.
  The following representations of the
  splitting error for \eqref{eq:phisplit} hold:
    \begin{subequations}\label{eq:errsplit}
        \begin{align}
            &\varphi_\ell(\tau L)-\ell !\varphi_\ell(\tau A)\varphi_\ell(\tau B)
            \nonumber \\  
            &=\tau\left(\varphi_{\ell+1}(\tau L)L-\varphi_{\ell+1}(\tau A)A
            -\ell !\varphi_\ell(\tau A)\varphi_{\ell+1}(\tau B)B\right)
            \label{eq:errsplit1}\\
            &=\tau^2\left(\varphi_{\ell+2}(\tau L)L^2
            -\varphi_{\ell+2}(\tau A)A^2
            -\ell !\varphi_\ell(\tau A)\varphi_{\ell+2}(\tau B)B^2
            -\frac{1}{\ell+1}\varphi_{\ell+1}(\tau A)AB\right). 
            \label{eq:errsplit2}
        \end{align}
    \end{subequations}
\end{lemma}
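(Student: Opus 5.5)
The plan is to use only the recurrence \eqref{eq:phirel}, read as an operator identity on suitable domains: $\varphi_k(\tau M)=\frac{1}{k!}+\tau\varphi_{k+1}(\tau M)M$ for $M\in\{A,B,L\}$ and $k\ge 0$. For unbounded generators this identity holds on $D(M)$, where $\varphi_{k+1}(\tau M)M$ is understood as acting on $D(M)$. We can also use the integral representation \eqref{eq:phifun} to extend $\varphi_{k+1}(\tau M)M$ to all of $X$, with value $\frac{1}{\tau}(\varphi_k(\tau M)-\frac{1}{k!})$. I will therefore state that both formulas hold on vectors in the appropriate domains: $D(A)\cap D(B)$ for \eqref{eq:errsplit1}, and on the intersection of the domains of $L^2$, $A^2$, $B^2$ and $AB$ for \eqref{eq:errsplit2}. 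The products are arranged so that every operator to the right of a $\varphi$ function is applied to the vector first. The proof is then a purely algebraic telescoping.

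\textbf{First representation.} Apply the recurrence with $k=\ell$ to $M=L$ and $M=A$, and write
\[
\ell!\varphi_\ell(\tau A)\varphi_\ell(\tau B)=\ell!\varphi_\ell(\tau A)\Bigl(\tfrac{1}{\ell!}+\tau\varphi_{\ell+1}(\tau B)B\Bigr)=\varphi_\ell(\tau A)+\tau\ell!\varphi_\ell(\tau A)\varphi_{\ell+1}(\tau B)B .
\]
Subtracting this from $\varphi_\ell(\tau L)$, and expanding $\varphi_\ell(\tau L)-\varphi_\ell(\tau A)$ by the recurrence, cancels the constants $\frac{1}{\ell!}$. This leaves exactly \eqref{eq:errsplit1}.

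\textbf{Second representation.} Apply the recurrence once more, now with $k=\ell+1$, to each of the three terms in \eqref{eq:errsplit1}:
\[
\varphi_{\ell+1}(\tau M)M=\tfrac{1}{(\ell+1)!}M+\tau\varphi_{\ell+2}(\tau M)M^2 .
\]
For $M=L$ and $M=A$ this gives $\frac{1}{(\ell+1)!}(L-A)=\frac{1}{(\ell+1)!}B$ plus the $\tau$-terms with $L^2$ and $A^2$. For the third term it gives
\[
\ell!\varphi_\ell(\tau A)\Bigl(\tfrac{1}{(\ell+1)!}B+\tau\varphi_{\ell+2}(\tau B)B^2\Bigr)=\tfrac{1}{\ell+1}\varphi_\ell(\tau A)B+\tau\ell!\varphi_\ell(\tau A)\varphi_{\ell+2}(\tau B)B^2 .
\]
The leftover first-order piece is $\frac{1}{(\ell+1)!}B-\frac{1}{\ell+1}\varphi_\ell(\tau A)B=-\frac{1}{\ell+1}\bigl(\varphi_\ell(\tau A)-\frac{1}{\ell!}\bigr)B$. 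The recurrence once more turns this into $-\frac{\tau}{\ell+1}\varphi_{\ell+1}(\tau A)AB$. Multiplying everything by the outer $\tau$ gives \eqref{eq:errsplit2}.

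\textbf{Main obstacle.} The algebra is routine; the delicate point is the functional-analytic meaning of expressions such as $\varphi_{\ell+1}(\tau A)AB$ for noncommuting unbounded operators. I would handle this by proving the identity for $v$ in a domain where $Bv\in D(A)$, $v\in D(L^2)\cap D(A^2)\cap D(B^2)$, and so on. On such a domain each use of the recurrence is applied to a vector in $D(M)$. The recurrence itself there follows from \eqref{eq:phifun} by integrating $\frac{d}{d\theta}\rme^{(1-\theta)\tau M}v$. This domain requirement is also the regularity that the later convergence proof will have to assume on the exact solution.
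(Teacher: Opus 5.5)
Your proof is correct and is essentially the paper's argument: two rounds of adding and subtracting the constants $\frac{1}{\ell!}$ and $\frac{1}{(\ell+1)!}$ via \eqref{eq:phirel}, where $L-A=B$ leaves the term $-\frac{1}{\ell+1}\left(\varphi_\ell(\tau A)-\frac{1}{\ell!}I\right)B$, which one more use of the recurrence turns into the $AB$ term. Your explicit domain bookkeeping (each recurrence applied to a vector in $\mathcal{D}(M)$, with $Bv\in\mathcal{D}(A)$ and $Lv=Av+Bv$) is a reasonable addition that the paper leaves implicit.
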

\begin{proof}
  The idea of the proof is to repeatedly employ relation~\eqref{eq:phirel},
  adding and subtracting suitable quantities. In particular, we have
\begin{equation*}
    \begin{aligned}
        \varphi_\ell(\tau L)-\ell !\varphi_\ell(\tau A)\varphi_\ell(\tau B)
        &= \varphi_\ell(\tau L)-\frac{1}{\ell!}I
        -\left(\varphi_\ell(\tau A)-\frac{1}{\ell!}I\right)
        -\ell!\varphi_\ell(\tau A)\left(\varphi_\ell(\tau B)
        -\frac{1}{\ell!}I\right) \\
        &= \tau\left(\varphi_{\ell+1}(\tau L)L-\varphi_{\ell+1}(\tau A)A
        -\ell !\varphi_\ell(\tau A)\varphi_{\ell+1}(\tau B)B\right),
    \end{aligned}
\end{equation*}
which is in fact~\eqref{eq:errsplit1}. Then, proceeding in the same way,
we write
\begin{equation*}
    \begin{aligned}
        &\tau\left(\varphi_{\ell+1}(\tau L)L-\varphi_{\ell+1}(\tau A)A
        -\ell !\varphi_\ell(\tau A)\varphi_{\ell+1}(\tau B)B\right) \\
        &= \tau \Bigg( \left(\varphi_{\ell+1}(\tau L)
        -\frac{1}{(\ell+1)!}I\right)L-\left(\varphi_{\ell+1}(\tau A)
        -\frac{1}{(\ell+1)!}I\right)A \\
        &\quad-\ell!\varphi_\ell(\tau A)\left(\varphi_{\ell+1}(\tau B)
        -\frac{1}{(\ell+1)!}I\right)B-\frac{1}{\ell+1}\left(\varphi_\ell(\tau A)
        -\frac{1}{\ell!}I\right)B\Bigg)\\
        &= \tau^2\left(\varphi_{\ell+2}(\tau L)L^2
        -\varphi_{\ell+2}(\tau A)A^2
        -\ell !\varphi_\ell(\tau A)\varphi_{\ell+2}(\tau B)B^2
        -\frac{1}{\ell+1}\varphi_{\ell+1}(\tau A)AB\right),
    \end{aligned}
\end{equation*}
thus obtaining formula~\eqref{eq:errsplit2}.
\end{proof}
Notice that in formula~\eqref{eq:errsplit2} the total degree of the monomials
made by the unbounded operators
$A$ and $B$ is two.
Also, for $\ell=0$ the result is a representation of the Lie--Trotter splitting
error for the exponential function, which
differs from those presented in~\cite{HO09,IK24} since it shows the products
of $A$ and $B$ in the rightmost positions.

With the aid of the lemma, we now derive the local error bounds that we
will use in the convergence proof of ERK1L. For brevity of notation,
we define the sets
\begin{equation*}
    \mathcal{D}_1 = \mathcal{D}(A)\cap\mathcal{D}(B),\quad
  \mathcal{D}_2 = \mathcal{D}(A^2)\cap\mathcal{D}(B^2)\cap\mathcal{D}(AB)
    \cap\mathcal{D}(BA).
\end{equation*}
\begin{proposition}\label{prop:splitorder}
  Under Assumption~\ref{assumption:semigroup}, the following bounds
  on the local error of approximation~\eqref{eq:phisplit} hold true:
  \begin{enumerate}
    \item if $v\in\mathcal{D}_1$, then
  \begin{equation*}
        \lVert\varphi_\ell(\tau L)v-\ell!\varphi_\ell(\tau A)
        \varphi_\ell(\tau B)v\rVert \leq C\tau;
  \end{equation*}
    \item if $v\in\mathcal{D}_2$, then
  \begin{equation*}
        \lVert\varphi_\ell(\tau L)v-\ell!\varphi_\ell(\tau A)
        \varphi_\ell(\tau B)v\rVert \leq C\tau^2.
  \end{equation*}
  \end{enumerate}
\end{proposition}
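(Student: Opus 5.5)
The plan is to apply the two representations of Lemma~\ref{lemma:phisplit} to a vector $v$ and bound each term with the uniform bounds \eqref{eq:phibound}. The constant $C$ may depend on the graph-type norms of $v$ (such as $\lVert Av\rVert$, $\lVert Bv\rVert$, $\lVert A^2v\rVert$), but not on $\tau$ or on the stiffness.

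\emph{First bound.} For $v\in\mathcal{D}_1$ we have $v\in\mathcal{D}(L)$ and $Lv=Av+Bv$. Applying \eqref{eq:errsplit1} to $v$ gives
\begin{equation*}
\varphi_\ell(\tau L)v-\ell!\varphi_\ell(\tau A)\varphi_\ell(\tau B)v
=\tau\left(\varphi_{\ell+1}(\tau L)(Av+Bv)-\varphi_{\ell+1}(\tau A)Av-\ell!\varphi_\ell(\tau A)\varphi_{\ell+1}(\tau B)Bv\right).
\end{equation*}
Every operator acting on the left is bounded by $C$ by \eqref{eq:phibound}. Hence the norm is at most $C\tau(\lVert Av\rVert+\lVert Bv\rVert)$.

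\emph{Second bound.} For $v\in\mathcal{D}_2$, I first check that $v\in\mathcal{D}(L^2)$. Since $v\in\mathcal{D}_1$, we have $Lv=Av+Bv$. Moreover $Av\in\mathcal{D}(A)\cap\mathcal{D}(B)$ because $v\in\mathcal{D}(A^2)\cap\mathcal{D}(BA)$, and $Bv\in\mathcal{D}(A)\cap\mathcal{D}(B)$ because $v\in\mathcal{D}(AB)\cap\mathcal{D}(B^2)$. Therefore $Lv\in\mathcal{D}_1\subseteq\mathcal{D}(L)$ and
\begin{equation*}
L^2v=A^2v+ABv+BAv+B^2v .
\end{equation*}
This is exactly why $\mathcal{D}(BA)$ appears in $\mathcal{D}_2$, even though only $AB$ occurs explicitly in \eqref{eq:errsplit2}.

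Applying \eqref{eq:errsplit2} to $v$, each of the four terms is a bounded operator from \eqref{eq:phibound} acting on one of $L^2v$, $A^2v$, $B^2v$, $ABv$. This gives a bound $C\tau^2$, with $C$ depending on $\lVert A^2v\rVert,\lVert B^2v\rVert,\lVert ABv\rVert,\lVert BAv\rVert$.

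\emph{Main obstacle.} The delicate point is rigor: the identities of Lemma~\ref{lemma:phisplit} were derived formally and must hold when applied to $v$ in the stated domains. I would justify this using the standard semigroup facts that $\varphi_k(\tau A)$ maps $X$ into $\mathcal{D}(A)$ and commutes with $A$ on $\mathcal{D}(A)$. As a consequence, \eqref{eq:phirel} holds in the form $(\varphi_\ell(\tau A)-\frac{1}{\ell!}I)w=\tau\varphi_{\ell+1}(\tau A)Aw$ for $w\in\mathcal{D}(A)$, and similarly for $B$ and $L$. Each step of the lemma's proof only applies \eqref{eq:phirel} to a vector lying in the appropriate domain: $v$ in the first step, and $Av$, $Bv$, $Lv$ in the second, all of which we verified above. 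So the identities hold pointwise on $\mathcal{D}_1$ and $\mathcal{D}_2$ respectively.
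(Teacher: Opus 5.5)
Your proposal is correct and follows the paper's proof, which applies the identities of Lemma~\ref{lemma:phisplit} to $v$ and bounds each term via \eqref{eq:phibound}. Your check that $Lv=Av+Bv\in\mathcal{D}_1$ (so $L^2v=A^2v+ABv+BAv+B^2v$) and your pointwise justification of the lemma spell out what the paper leaves implicit. One small correction: $\varphi_k(\tau A)$ maps $X$ into $\mathcal{D}(A)$ only for $k\ge1$, not for $\rme^{\tau A}$ in general (e.g.\ the unitary groups of Subsection~\ref{ssec:drncc}); since you only use this for $\varphi_{\ell+1}$, the argument is unaffected.
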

\begin{proof}
  The results can be proved combining bounds~\eqref{eq:phibound} with
   identities \eqref{eq:errsplit1} and
   \eqref{eq:errsplit2}, and taking into account that
   $v\in\mathcal{D}_1$ and $v\in\mathcal{D}_2$, respectively.
\end{proof}
To present the convergence result for the scheme ERK1L,
we need two additional assumptions.
\begin{assumption}\label{assumption:contraction}
The semigroups generated by the operators $A$ and $B$ satisfy
the following bounds:
\begin{equation*}
  \lVert\rme^{t A}\rVert\le 1,
  \quad\lVert\rme^{t B}\rVert\leq 1,\quad 
    0\leq t \leq T.
\end{equation*}
\end{assumption}
Notice that, if the previous contraction bounds
are not directly satisfied, in some cases a standard shift and renorming
procedure can be applied, for instance when $A$ generates a strongly
continuous semigroup and $B$ is a bounded operator.
\begin{assumption}\label{assumption:order1}
  Problem~\eqref{eq:pde} has solution $u(t)\in\mathcal{D}_2$, and
$g (u(t))\in\mathcal{D}_1$. Moreover, $g$ is Fr{\'e}chet
differentiable, and its derivative is uniformly bounded.
\end{assumption}
We are now in a position to state the main result of the subsection.

\begin{theorem}\label{thm:serk1}
Let Assumptions \ref{assumption:semigroup}, \ref{assumption:contraction} and 
\ref{assumption:order1} be valid. Then, the following bound on the global
error 
of scheme ERK1L~\eqref{eq:erk1l} holds:
\begin{equation*}
    \lVert u_n-u(t_n)\rVert\leq C\tau,\quad 0\leq n\tau\leq T.
\end{equation*}
The constant $C$ is independent of $\tau$ and $n$, but it depends on $T$.
\end{theorem}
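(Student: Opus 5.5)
The plan is the classical stability-plus-consistency argument: derive a recursion for the global error $e_n=u_n-u(t_n)$, bound the defects with Proposition~\ref{prop:splitorder}, and close with the contraction bounds of Assumption~\ref{assumption:contraction}.

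\emph{Defect of the exact solution.} Inserting the exact solution into \eqref{eq:erk1l} gives
\begin{equation*}
u(t_{n+1})=\rme^{\tau A}\rme^{\tau B}u(t_n)+\tau\varphi_1(\tau A)\varphi_1(\tau B)g(u(t_n))+\delta_{n+1}.
\end{equation*}
To bound $\delta_{n+1}$, I write it as the sum of two parts. The first part is the local error of the unsplit exponential Euler method \eqref{eq:eeg}, namely
\begin{equation*}
u(t_{n+1})-\rme^{\tau L}u(t_n)-\tau\varphi_1(\tau L)g(u(t_n)).
\end{equation*}
The second part is the splitting contribution
\begin{equation*}
\bigl(\rme^{\tau L}-\rme^{\tau A}\rme^{\tau B}\bigr)u(t_n)+\tau\bigl(\varphi_1(\tau L)-\varphi_1(\tau A)\varphi_1(\tau B)\bigr)g(u(t_n)).
\end{equation*}
\begin{itemize}
\item \textbf{Unsplit local error.} By the variation-of-constants formula this part equals
\begin{equation*}
\int_0^\tau \rme^{(\tau-s)L}\bigl(g(u(t_n+s))-g(u(t_n))\bigr)\,ds.
\end{equation*}
Since $g$ has a uniformly bounded Fr\'echet derivative and $u'=Lu+g(u)$ is bounded (because $u(t)\in\mathcal{D}_2\subset\mathcal{D}(L)$), the integrand is $O(s)$. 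Hence this part is $O(\tau^2)$, with the constant depending on $\sup_t\|Lu(t)\|$.
\item \textbf{Splitting contribution.} Apply Proposition~\ref{prop:splitorder}. Part~2 with $\ell=0$ and $v=u(t_n)\in\mathcal{D}_2$ gives $O(\tau^2)$ for the exponential term. Part~1 with $\ell=1$ and $v=g(u(t_n))\in\mathcal{D}_1$ gives $O(\tau)$, which the factor $\tau$ turns into $O(\tau^2)$.
\end{itemize}
Together these yield $\|\delta_{n+1}\|\le C\tau^2$, uniformly in $n$. The constant depends on uniform-in-time bounds of $\|A^2u\|$, $\|B^2u\|$, $\|ABu\|$, $\|Ag(u)\|$, $\|Bg(u)\|$ along the solution. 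I will state explicitly that Assumption~\ref{assumption:order1} is understood to give these bounds uniformly on $[0,T]$; this is the point that needs the most care in the hypotheses.

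\emph{Error recursion and stability.} Subtracting the two relations gives
\begin{equation*}
e_{n+1}=\rme^{\tau A}\rme^{\tau B}e_n+\tau\varphi_1(\tau A)\varphi_1(\tau B)\bigl(g(u_n)-g(u(t_n))\bigr)-\delta_{n+1}.
\end{equation*}
Assumption~\ref{assumption:contraction} gives $\|\rme^{\tau A}\rme^{\tau B}\|\le1$, and by \eqref{eq:phibound} (indeed $\|\varphi_1\|\le1$ under contraction) the $\varphi_1$ factors are bounded. The Lipschitz bound on $g$ from its bounded derivative then yields
\begin{equation*}
\|e_{n+1}\|\le(1+C\tau)\|e_n\|+C\tau^2 .
\end{equation*}
With $e_0=0$, a discrete Gronwall argument gives $\|e_n\|\le C\rme^{CT}n\tau^2\le C\tau$ for $n\tau\le T$.

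The contraction assumption is essential here. Without it the powers $(\rme^{\tau A}\rme^{\tau B})^k$ need not be uniformly bounded, because the split product is not a semigroup; this is precisely why Assumption~\ref{assumption:contraction} is imposed. I therefore expect the stability step to be straightforward, and the main obstacle to be only the careful accounting in the consistency step. There the $O(\tau)$ splitting error of $\varphi_1$ on $\mathcal{D}_1$ suffices because of the extra factor $\tau$, whereas the exponential part genuinely requires the $\mathcal{D}_2$ regularity and \eqref{eq:errsplit2}.
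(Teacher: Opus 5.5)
Your proposal is correct and follows essentially the same route as the paper: the same defect terms (the splitting error of $\rme^{\tau L}$ on $u(t_n)\in\mathcal{D}_2$, $\tau$ times the splitting error of $\varphi_1$ on $g(u(t_n))\in\mathcal{D}_1$, and the $O(\tau^2)$ remainder of the unsplit exponential Euler step), then contractivity of $\rme^{\tau A}\rme^{\tau B}$, Lipschitz continuity of $g$, and a discrete Gronwall argument. The differences are cosmetic: you close with the one-step inequality $\lVert e_{n+1}\rVert\le(1+C\tau)\lVert e_n\rVert+C\tau^2$ rather than first unrolling the recursion, and you state explicitly that the constants require uniform-in-time bounds on $\lVert Lu\rVert$, $\lVert A^2u\rVert$, $\lVert ABu\rVert$, etc., which the paper leaves implicit.
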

\begin{proof}
The exact solution of \eqref{eq:pde} can be written by means of the 
variation-of-constants formula as
\begin{equation}\label{eq:voc}
    u(t_{n+1}) = \rme^{\tau L}u(t_n)+\int_0^\tau \rme^{(\tau-s)L}
    g(u(t_n+s))ds.
\end{equation}
Now, we define
\begin{equation*}
  h(t)=g(u(t))
  \end{equation*}
and consider its Taylor expansion
\begin{equation}\label{eq:h}
    h(t_n+s) = h(t_n)+\int_0^s h'(t_n+\sigma)d\sigma.
\end{equation}
Then, we take the difference between the approximate solution and 
\eqref{eq:voc}. Inserting expansion~\eqref{eq:h}, we obtain  
\begin{equation*} 
\begin{aligned}
    u_{n+1}-u(t_{n+1}) 
    &= \rme^{\tau A}\rme^{\tau B}u_n-\rme^{\tau L}u(t_n)+\tau \varphi_1(\tau
    A)\varphi_1(\tau B)g(u_n)-\tau\varphi_1(\tau L)h(t_n)
    \\&\quad-\int_0^\tau \rme^{(\tau-s)L}\int_0^s h'(t_n+\sigma)d\sigma ds.
\end{aligned}
\end{equation*}
Defining $\epsilon_n=u_n-u(t_n)$, we rewrite the previous expression as 
\begin{equation}\label{eq:rec1}
    \epsilon_{n+1} = \rme^{\tau A}\rme^{\tau B}\epsilon_{n}+\delta_{n+1},\quad 
    \delta_{n+1}=\sum_{k=1}^4 \delta_{n+1,k},
\end{equation}
where
\begin{align*}
    \delta_{n+1,1}&=\left(\rme^{\tau A}\rme^{\tau B}-\rme^{\tau L}\right)u(t_n),\\
    \delta_{n+1,2}&=\tau\left(\varphi_1(\tau A)\varphi_1(\tau B)
    -\varphi_1(\tau L)\right)h (t_n),\\
    \delta_{n+1,3}&=\tau\varphi_1(\tau A)\varphi_1(\tau B)(g(u_n)-h(t_n)),\\
    \delta_{n+1,4}&=-\int_0^\tau \rme^{(\tau-s)L}\int_0^s h'(t_n+\sigma)d\sigma ds.
\end{align*}
Solving recurrence \eqref{eq:rec1} with $\epsilon_0=0$, we get
\begin{equation*}
    \epsilon_n 
    = \sum_{j=0}^{n-1}\left(\rme^{\tau A}\rme^{\tau B}\right)^{n-j-1} \delta_{j+1}
    =\sum_{j=0}^{n-1}\left(\rme^{\tau A}\rme^{\tau B}\right)^j \delta_{n-j},
\end{equation*}
and, since Assumption~\ref{assumption:contraction} holds true, we obtain
\begin{equation*}
    \lVert\epsilon_n\rVert 
    \leq \sum_{j=0}^{n-1}\left\lVert\rme^{\tau A}\right\rVert^j
    \left\lVert\rme^{\tau B}\right\rVert^j
    \lVert\delta_{n-j}\rVert
    \leq \sum_{j=0}^{n-1} \lVert\delta_{j+1}\rVert 
    \leq \sum_{j=0}^{n-1}\sum_{k=1}^4 \lVert\delta_{j+1,k}\rVert.
\end{equation*}
Then, under Assumption~\ref{assumption:order1}, we have  
\begin{equation*}
    \lVert\delta_{j+1,1}\rVert\leq C\tau^2,\quad 
    \lVert\delta_{j+1,2}\rVert\leq C\tau^2,\quad
    \lVert\delta_{j+1,3}\rVert\leq C\tau \lVert\epsilon_{j}\rVert,\quad
    \lVert\delta_{j+1,4}\rVert\leq C\tau^2,
\end{equation*}
where, in particular, the first two inequalities follow from 
Proposition~\ref{prop:splitorder}, while the other two follow from
the fact that $g$ is Lipschitz and its
derivative is uniformly bounded, respectively. Coming back to the global
error, the previous bounds lead to
\begin{equation*}
    \lVert\epsilon_n\rVert \leq C\tau+C\tau\sum_{j=1}^{n-1}\lVert
    \epsilon_{j}\rVert
\end{equation*}
and, applying the discrete Gronwall lemma~\cite[Lemma 2.15]{HO10}, we conclude.
\end{proof}
\begin{remark}\label{rk:lstab}
The exponential Euler method \eqref{eq:eeg} can be equivalently written as
\begin{equation*}
    \tilde u_{n+1}=\tilde u_n+\tau\varphi_1(\tau L)(L\tilde u_n+g(\tilde u_n)),
\end{equation*}
and, employing approximation~\eqref{eq:phisplit} in this formulation, we
obtain a split scheme different from ERK1L~\eqref{eq:erk1l}, namely
\begin{equation}\label{eq:seef}
    u_{n+1} = u_n+\tau\varphi_1(\tau A)\varphi_1(\tau B)(Lu_n+g(u_n)).
\end{equation}
However, the latter does not show favourable stability properties. Indeed,
similarly to what has been done in \cite{BM22},
we now study the stability of the methods
for a fixed time step size by considering
the extension of the Dahlquist equation
\begin{equation*}
\left\{
  \begin{aligned}
    u'(t) &= (\alpha+\beta) u(t),
    \quad \alpha,\beta\in\CC, \\
    u(0) &= u_0.
\end{aligned}\right.
\end{equation*}
Concerning ERK1L,
it is straightforward to get that the stability function is
$\rme^{z}$, where $z=\tau(\alpha+\beta)$,
while the one of scheme~\eqref{eq:seef} is
\begin{equation*}
    R(z_\alpha,z_\beta)=
     1+(\rme^{z_\alpha }-1)
    \left(\frac{\rme^{z_\beta}-1}{z_\beta}\right)+
    \left(\frac{\rme^{z_\alpha}-1}{z_\alpha}\right)
    (\rme^{z_\beta }-1),\quad z_\alpha=\tau\alpha,\ z_\beta=\tau\beta.
\end{equation*}
It is easy to check
that $\lvert R(z_\alpha,z_\beta)\rvert\to1$
for $\Re(z_\alpha),\Re(z_\beta)\to-\infty$.
Therefore, for problems
having solution with high frequency components, 
scheme~\eqref{eq:seef} may produce spurious
oscillations in the numerical solution, differently from ERK1L.
This is the case, for instance, of the experiment in 
Subsection~\ref{ssec:michaelismenten}. 
\end{remark}

\subsection{Split exponential integrator ERK2L}
We consider now a split version of the second-order exponential
Runge--Kutta scheme known as ETD2RK \cite[formula (22)]{CM02}
\begin{equation}\label{eq:erk}
    \begin{aligned}
      \tilde U_{n2} &= \rme^{\tau L}\tilde u_n
      +\tau\varphi_1(\tau L)g(\tilde u_n),\\  
      \tilde u_{n+1} &= \tilde U_{n2}+
      \tau\varphi_2(\tau L)(g(\tilde U_{n2})-g(\tilde u_n)). 
    \end{aligned}
\end{equation}
Since we are interested in deriving a second-order split integrator, we look
for a local
error of the scheme proportional to $\tau^3$. From this
consideration, we expect that the $\varphi_1$ and the $\varphi_2$ can be
replaced by the second-order approximations in formula \eqref{eq:phisplit},
since they are multiplied by an additional $\tau$. This is not the case for
$\rme^{\tau L}$, and therefore we should employ a higher-order approximation.
A natural
choice is
\begin{equation}\label{eq:strang}
    \rme^{\tau L} \approx \rme^{\frac{\tau}{2}A}\rme^{\tau B}\rme^{\frac{\tau}{2}A},
\end{equation}
which corresponds to the Strang splitting of the exponential function.
Then, inserting \eqref{eq:phisplit} and 
\eqref{eq:strang} in ETD2RK, we obtain the following split integrator that we
call ERK2L:
\begin{equation}\label{eq:erk2l}
    \begin{aligned}
        U_{n2} &= \rme^{\frac{\tau}{2}A} \rme^{\tau B} \rme^{\frac{\tau}{2}A} u_n
        +\tau\varphi_1(\tau A) \varphi_1(\tau B)g(u_n),\\  
        u_{n+1} &= U_{n2}+2\tau\varphi_2(\tau A)\varphi_2(\tau B)(g(U_{n2})
        -g(u_n)).
    \end{aligned}
\end{equation}
Notice that this integrator coincides with the homonym presented
in \cite[formula~(3.2)]{CCEO26} for commuting operators
$A$ and $B$.

Similarly to what done before, we start by analyzing
Strang splitting~\eqref{eq:strang}, getting an explicit
representation of the error.
\begin{lemma}\label{lemma:strang}
  The following representation of the
  Strang splitting error for \eqref{eq:strang} holds:
\begin{equation}\label{eq:errstrang}
    \begin{aligned}
        &\rme^{\tau L}-\rme^{\frac{\tau}{2}A}\rme^{\tau B}\rme^{\frac{\tau}{2}A}\\
        &=\tau^3 \left[\varphi_3(\tau L)L^3
        -\frac{1}{8}\left(\left(\rme^{\frac{\tau}{2}A}\rme^{\tau B}+I\right)
        \varphi_3\left(\frac{\tau}{2}A\right)
        +\varphi_2\left(\frac{\tau}{2} A\right)
        +\frac{1}{2}\varphi_1\left(\frac{\tau}{2} A\right)\right)A^3 \right.\\
        &\quad\left.   
        -\rme^{\frac{\tau}{2}A}\varphi_3(\tau B)B^3
        -\frac{1}{4}\varphi_2\left(\frac{\tau}{2}A\right)A^2 B
        -\frac{1}{4}\varphi_1\left(\frac{\tau}{2}A\right)A B^2
        -\frac{1}{2}\rme^{\frac{\tau}{2}A}\varphi_2(\tau B)B^2 A
        \right.\\
        &\quad\left.
        -\frac{1}{8}\rme^{\frac{\tau}{2}A}\varphi_1(\tau B)B A^2
        -\frac{1}{4}\varphi_1\left(\frac{\tau}{2}A\right)ABA \right].
    \end{aligned}
\end{equation}
\end{lemma}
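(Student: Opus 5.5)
The plan is to mimic the proof of Lemma~\ref{lemma:phisplit}. We repeatedly use the recurrence~\eqref{eq:phirel} in the forms $\rme^{z}=I+z\varphi_1(z)$, $\varphi_1(z)=I+z\varphi_2(z)$ and $\varphi_2(z)=\tfrac12 I+z\varphi_3(z)$. At each stage we peel off the constant term, add and subtract suitable quantities, and always keep the unbounded operators in the rightmost positions. The target is to show that all contributions of total degree zero, one and two in $A$ and $B$ cancel. What survives is then exactly $\tau^3$ times cubic monomials, as in~\eqref{eq:errstrang}.

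\emph{Step 1.} For the unsplit part we write
\[
\rme^{\tau L}=I+\tau L+\tfrac{\tau^2}{2}L^2+\tau^3\varphi_3(\tau L)L^3,
\]
which follows by applying~\eqref{eq:phirel} three times.

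\emph{Step 2.} For the split product we expand from left to right, so that the unbounded operators end up on the right.

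\begin{itemize}
\item Write the rightmost factor as $\rme^{\frac{\tau}{2}A}=I+\tfrac{\tau}{2}\varphi_1(\tfrac{\tau}{2}A)A$. This gives $\rme^{\frac{\tau}{2}A}\rme^{\tau B}$ plus $\tfrac{\tau}{2}\rme^{\frac{\tau}{2}A}\rme^{\tau B}\varphi_1(\tfrac{\tau}{2}A)A$.
\item Expand $\rme^{\frac{\tau}{2}A}\rme^{\tau B}$ as in Lemma~\ref{lemma:phisplit}, but to third order:
\[
\rme^{\tau B}=I+\tau B+\tfrac{\tau^2}{2}B^2+\tau^3\varphi_3(\tau B)B^3,
\]
and then expand each left factor $\rme^{\frac{\tau}{2}A}$ (multiplying $I$, $B$, $B^2$) to the order needed to make the total degree three. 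For example, $\rme^{\frac{\tau}{2}A}\tau B=\tau B+\tfrac{\tau^2}{2}AB+\tfrac{\tau^3}{4}\varphi_2(\tfrac{\tau}{2}A)A^2B$.
\item Treat the term $\tfrac{\tau}{2}\rme^{\frac{\tau}{2}A}\rme^{\tau B}\varphi_1(\tfrac{\tau}{2}A)A$ in the same way. Expand $\varphi_1(\tfrac{\tau}{2}A)$ and $\varphi_2(\tfrac{\tau}{2}A)$ when they act directly on $A$. Expand $\rme^{\tau B}$ when it stands in front of degree-one or degree-two terms. This produces monomials such as $BA$, $B^2A$, $BA^2$ and $ABA$, together with mixed remainders such as $\rme^{\frac{\tau}{2}A}\varphi_1(\tau B)BA^2$.
\end{itemize}

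\emph{Step 3.} Collect the terms by degree. The degree-zero terms cancel ($I-I$). The degree-one terms cancel as $\tau(A+B)$ against $\tau L$. The degree-two terms must match $\tfrac{\tau^2}{2}L^2=\tfrac{\tau^2}{2}(A^2+AB+BA+B^2)$. This is exactly where the symmetric Strang placement matters: the two half-steps of $A$ generate $\tfrac{\tau^2}{8}A^2$ twice plus $\tfrac{\tau^2}{4}A^2$ from the cross term, which sum to $\tfrac{\tau^2}{2}A^2$. Likewise $\tfrac{\tau^2}{2}AB$ and $\tfrac{\tau^2}{2}BA$ each arise once. All remaining terms carry $\tau^3$ and one of the cubic monomials in~\eqref{eq:errstrang}.

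\emph{Main obstacle.} The main difficulty is the bookkeeping for the $A^3$ coefficient, where the $\varphi_3(\tfrac{\tau}{2}A)$, $\varphi_2$ and $\varphi_1$ contributions from the two half-steps must combine into the stated form $\tfrac18\bigl((\rme^{\frac{\tau}{2}A}\rme^{\tau B}+I)\varphi_3+\varphi_2+\tfrac12\varphi_1\bigr)$. Some operator factors are deliberately left unexpanded there, namely $\rme^{\frac{\tau}{2}A}\rme^{\tau B}$ multiplying $\varphi_3(\tfrac{\tau}{2}A)A^3$.

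To control this, I would fix the expansion order in advance. At each level of the expansion of the product, only the rightmost new $\varphi$-factor acting directly on a monomial is expanded, and leading prefactors are kept intact once the degree reaches three. The final step is a direct check that the accumulated degree-two coefficients match $\tfrac12 L^2$.
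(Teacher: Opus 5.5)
Your proposal is correct and is essentially the paper's argument: both repeatedly apply \eqref{eq:phirel} with the unbounded operators kept on the right, starting from the same telescoping $\rme^{\frac{\tau}{2}A}\rme^{\tau B}\rme^{\frac{\tau}{2}A}-I=(\rme^{\frac{\tau}{2}A}-I)+\rme^{\frac{\tau}{2}A}(\rme^{\tau B}-I)+\rme^{\frac{\tau}{2}A}\rme^{\tau B}(\rme^{\frac{\tau}{2}A}-I)$, and your expansion rule reproduces every cubic term of \eqref{eq:errstrang}, including the $A^3$ coefficient you flagged as the main obstacle. The only difference is bookkeeping: you expand both sides fully to degree three and check that the degree-$\le 2$ terms cancel against $I+\tau L+\tfrac{\tau^2}{2}L^2$ at the end, whereas the paper peels off one degree at a time directly on the difference.
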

\begin{proof}
To prove \eqref{eq:errstrang} we proceed as in Lemma~\ref{lemma:phisplit},
that is we repeatedly apply relation~\eqref{eq:phirel}. First, we get
\begin{equation*}
    \begin{aligned}
        &\rme^{\tau L}-\rme^{\frac{\tau}{2}A}\rme^{\tau B}\rme^{\frac{\tau}{2}A}\\ 
        &=\rme^{\tau L}-I-\left(\rme^{\frac{\tau}{2}A}\rme^{\tau B}+I\right)
        \left(\rme^{\frac{\tau}{2}A}-I\right)-\rme^{\frac{\tau}{2}A}
        \left(\rme^{\tau B}-I\right)\\
        &=\tau\left[\varphi_1(\tau L)L-\frac{1}{2}\left(\rme^{\frac{\tau}{2}A}
        \rme^{\tau B}+I\right)\varphi_1\left(\frac{\tau}{2}A\right)A
        -\rme^{\frac{\tau}{2}A}\varphi_1(\tau B)B\right].\\
    \end{aligned}
\end{equation*}
Then, adopting the same strategy, we rewrite the previous identity as
\begin{equation*}
    \begin{aligned}
        &\tau\left[\left(\varphi_1(\tau L)-I\right)L
        -\frac{1}{2}\left(\left(\rme^{\frac{\tau}{2}A}\rme^{\tau B}+I\right)
        \left(\varphi_1\left(\frac{\tau}{2}A\right)-I\right)+\rme^{\frac{\tau}
        {2}A}-I\right)A \right.\\
        &\quad\left.-\rme^{\frac{\tau}{2}A}\left(\varphi_1(\tau B)-I\right)B
        -\left(\rme^{\frac{\tau}{2}A}-I\right)B-\frac{1}{2}\rme^{\frac{\tau}{2}A}
        \left(\rme^{\tau B}-I\right)A\right]\\
        &=\tau^2\left[\varphi_2(\tau L)L^2
        -\frac{1}{4}\left(\left(\rme^{\frac{\tau}{2}A}\rme^{\tau B}+I\right)
        \varphi_2\left(\frac{\tau}{2}A\right)
        +\varphi_1\left(\frac{\tau}{2}A\right)\right)A^2 \right.\\
        &\quad\left. -\rme^{\frac{\tau}{2}A}\varphi_2(\tau B)B^2
        -\frac{1}{2}\varphi_1\left(\frac{\tau}{2}A\right)AB
        -\frac{1}{2}\rme^{\frac{\tau}{2}A}\varphi_1(\tau B)BA\right]
    \end{aligned}
\end{equation*}
Finally, the last expression is manipulated to obtain 
\begin{equation*}
    \begin{aligned}
        &\tau^2\left[\left(\varphi_2(\tau L)-\frac{1}{2}I\right)L^2
        -\frac{1}{4}\left(\left(\rme^{\frac{\tau}{2}A}\rme^{\tau B}+I\right)
        \left(\varphi_2\left(\frac{\tau}{2}A\right)-\frac{1}{2}I\right)
        +\varphi_1\left(\frac{\tau}{2}A\right)-I \right.\right.\\
        &\left.\left.
        +\frac{1}{2}\left(\rme^{\frac{\tau}{2}A}-I\right)
        +\frac{1}{2}\rme^{\frac{\tau}{2}A}\left(\rme^{\tau B}-I\right)\right)A^2 
        -\left(\rme^{\frac{\tau}{2}A}\left(\varphi_2(\tau B)-\frac{1}{2}I\right)+\frac{1}{2}\left(\rme^{\frac{\tau}{2}A}-I\right)\right)B^2 \right.\\
        &\left.-\frac{1}{2}\left(\varphi_1\left(\frac{\tau}{2}A\right)-I\right)AB
        -\frac{1}{2}\left(\rme^{\frac{\tau}{2}A}\left(\varphi_1(\tau B)-I\right)+\rme^{\frac{\tau}{2}A}-I\right)BA\right]
    \end{aligned}
\end{equation*}
and we conclude employing again \eqref{eq:phirel}.  
\end{proof}
As expected, the total degree of the monomials made by the unbounded
operators $A$ and $B$ in formula~\eqref{eq:errstrang} is three.
Moreover, this representation of the Strang
splitting error differs from those
presented in \cite{JL00,HO09,IK24}.

We now define the set $\mathcal{D}_3$ as the intersection
of the domains of all the monomials made by $A$ and $B$ of total degree
three.
Thanks to the previous result, we are able to obtain the following local error
bound.
\begin{proposition}\label{prop:splitorderstrang}
    Under Assumption~\ref{assumption:semigroup}, if $v\in\mathcal{D}_3$ the
    following bound on the local error of approximation \eqref{eq:strang} 
    holds true:
    \begin{equation*}
        \lVert \rme^{\tau L}v-\rme^{\frac{\tau}{2} A}\rme^{\tau B}
        \rme^{\frac{\tau}{2} A}v\rVert \leq C\tau^3
    \end{equation*}
\end{proposition}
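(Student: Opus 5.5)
The bound follows directly from the exact representation in Lemma~\ref{lemma:strang}, in the same way as Proposition~\ref{prop:splitorder} follows from Lemma~\ref{lemma:phisplit}. I apply identity~\eqref{eq:errstrang} to $v\in\mathcal{D}_3$, which factors out $\tau^3$. What remains is a finite sum of ten terms. Each has the form of a bounded operator-valued coefficient acting on one of the degree-three monomials
\[
L^3v,\ A^3v,\ B^3v,\ A^2Bv,\ AB^2v,\ B^2Av,\ BA^2v,\ ABAv.
\]

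First I check that these vectors are well defined. By definition $\mathcal{D}_3$ is the intersection of the domains of all degree-three monomials in $A$ and $B$. Hence every monomial appearing in \eqref{eq:errstrang} can be applied to $v$ and yields a fixed element of $X$, independent of $\tau$.

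The term $L^3v$ needs a short justification. On $\mathcal{D}_3$, and on the intermediate domains reached along the way, one has $L=A+B$. Expanding $(A+B)^3v$ therefore gives a sum of the eight monomials of degree three. Each of these is defined on $\mathcal{D}_3$, and the intermediate vectors $Xv$ and $XYv$ (with $X,Y\in\{A,B\}$) lie in $\mathcal{D}(A)\cap\mathcal{D}(B)\subseteq\mathcal{D}(L)$. So $v\in\mathcal{D}(L^3)$ and $\lVert L^3v\rVert\le C$.

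Next I bound the coefficients uniformly in $\tau\in(0,T]$ using \eqref{eq:sgbound} and \eqref{eq:phibound}. The relevant operators are
\[
\varphi_3(\tau L),\quad \varphi_k\!\left(\tfrac{\tau}{2}A\right),\quad \varphi_k(\tau B),\quad \rme^{\frac{\tau}{2}A},\quad \rme^{\frac{\tau}{2}A}\rme^{\tau B}+I,
\]
together with their products. The arguments $\tfrac{\tau}{2}\le T$ stay within the range of those bounds, so each coefficient has norm at most $C$ or $C^2+1$. Summing the ten contributions then gives
\[
\lVert \rme^{\tau L}v-\rme^{\frac{\tau}{2}A}\rme^{\tau B}\rme^{\frac{\tau}{2}A}v\rVert\le C\tau^3,
\]
where $C$ depends on the norms of the monomials applied to $v$.

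The only mildly delicate step is the domain bookkeeping, in particular justifying $L^3v$ as above. Once identity~\eqref{eq:errstrang} is valid pointwise on $\mathcal{D}_3$, the estimate itself is routine.
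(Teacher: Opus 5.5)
Your proposal is correct and is essentially the paper's proof. The paper applies identity \eqref{eq:errstrang} to $v\in\mathcal{D}_3$ and bounds the operator coefficients via \eqref{eq:sgbound} and \eqref{eq:phibound}, just as you do; your explicit check that $v\in\mathcal{D}(L^3)$, with $L^3v$ expanding into the eight degree-three monomials, is a domain detail the paper leaves implicit.
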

\begin{proof}
The proof is the same as the one in Proposition~\ref{prop:splitorder}, using 
identity \eqref{eq:errstrang}.
\end{proof}

Finally, before stating the convergence result for ERK2L, we consider the
following regularity assumption. 

\begin{assumption}\label{assumption:order2}
Problem \eqref{eq:pde} has solution $u(t)\in\mathcal{D}_3$, and
$g(u(t))\in\mathcal{D}_2$. Moreover, $g$ is twice Fr{\'e}chet 
differentiable, and its derivatives are uniformly bounded.
\end{assumption}

\begin{theorem}\label{thm:serk2}
Let Assumptions \ref{assumption:semigroup}, \ref{assumption:contraction} and 
\ref{assumption:order2} be valid. Then, the following bound on the global error
of scheme ERK2L~\eqref{eq:erk2l} holds:
\begin{equation*}
    \lVert u_n-u(t_n)\rVert\leq C\tau^2,\quad 0\leq n\tau\leq T.
\end{equation*}
The constant $C$ is independent of $\tau$ and $n$, but it depends on $T$.
\end{theorem}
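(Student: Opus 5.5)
The proof will follow the same pattern as Theorem~\ref{thm:serk1}: write the global error $\epsilon_n=u_n-u(t_n)$ as a perturbed recursion whose propagator is a product of contractions, show that the local defects are $O(\tau^3)$ plus Lipschitz terms of size $O(\tau)\lVert\epsilon_n\rVert$, and finish with the discrete Gronwall lemma~\cite[Lemma 2.15]{HO10}. The key difficulty relative to first order is that the ETD2RK structure must be exploited to cancel the $O(\tau^2)$ quadrature defect.

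First I introduce the exact internal stage $\hat U_{n2}=\rme^{\tau L}u(t_n)+\tau\varphi_1(\tau L)h(t_n)$, where $h(t)=g(u(t))$. I also set $\hat u_{n+1}=\hat U_{n2}+\tau\varphi_2(\tau L)(g(\hat U_{n2})-h(t_n))$, which is one ETD2RK step started from $u(t_n)$. By the variation-of-constants formula~\eqref{eq:voc}, with $h$ expanded to second order,
\[
h(t_n+s)=h(t_n)+s h'(t_n)+\int_0^s(s-\sigma)h''(t_n+\sigma)\,d\sigma,
\]
we get $u(t_{n+1})=\rme^{\tau L}u(t_n)+\tau\varphi_1(\tau L)h(t_n)+\tau^2\varphi_2(\tau L)h'(t_n)+O(\tau^3)$. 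Here the remainder is bounded using~\eqref{eq:sgbound} and the boundedness of $h''$, which follows from Assumption~\ref{assumption:order2} together with the smoothness of $u$.

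Next I note that $\hat U_{n2}-u(t_{n+1})=O(\tau^2)$. Taylor expanding $g(\hat U_{n2})=g(u(t_{n+1}))+O(\tau^2)=h(t_n)+\tau h'(t_n)+O(\tau^2)$ then gives $\tau\varphi_2(\tau L)(g(\hat U_{n2})-h(t_n))=\tau^2\varphi_2(\tau L)h'(t_n)+O(\tau^3)$. This is the standard stiff-order-two argument for ETD2RK, and it shows $\hat u_{n+1}-u(t_{n+1})=O(\tau^3)$. No domain conditions enter here beyond boundedness of the semigroups and of the derivatives of $g$.

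Then I compare the scheme~\eqref{eq:erk2l} with $\hat u_{n+1}$. Writing $S=\rme^{\frac{\tau}{2}A}\rme^{\tau B}\rme^{\frac{\tau}{2}A}$, I get $\epsilon_{n+1}=S\epsilon_n+\delta_{n+1}$. The defect $\delta_{n+1}$ collects the following terms:
\begin{itemize}
\item[(i)] $(S-\rme^{\tau L})u(t_n)$, which is $O(\tau^3)$ by Proposition~\ref{prop:splitorderstrang} since $u(t_n)\in\mathcal{D}_3$;
\item[(ii)] $\tau(\varphi_1(\tau A)\varphi_1(\tau B)-\varphi_1(\tau L))h(t_n)$, which is $O(\tau^3)$ by Proposition~\ref{prop:splitorder}(2) since $h(t_n)\in\mathcal{D}_2$;
\item[(iii)] a term of the form $\tau\,2\varphi_2(\tau A)\varphi_2(\tau B)(g(\hat U_{n2})-h(t_n))-\tau\varphi_2(\tau L)(g(\hat U_{n2})-h(t_n))$;
\item[(iv)] Lipschitz terms, bounded by $C\tau\lVert\epsilon_n\rVert$, coming from $g(u_n)-h(t_n)$ and from $g(U_{n2})-g(\hat U_{n2})$.
\end{itemize}
For (iv) I use $\lVert U_{n2}-\hat U_{n2}\rVert\le C\lVert\epsilon_n\rVert+C\tau^3$, together with the bounds~\eqref{eq:phibound}.

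I expect term (iii) to be the main obstacle. Proposition~\ref{prop:splitorder} cannot be applied directly, because $g(\hat U_{n2})$ is not known to lie in $\mathcal{D}_2$. My plan is to write $g(\hat U_{n2})-h(t_n)=(h(t_{n+1})-h(t_n))+(g(\hat U_{n2})-h(t_{n+1}))$. The second piece is $O(\tau^2)$, so multiplied by $\tau$ and bounded operators it contributes $O(\tau^3)$. The first piece equals $\int_0^\tau h'(t_n+s)\,ds$. I apply Proposition~\ref{prop:splitorder}(1) to it, which only requires $h'(t)\in\mathcal{D}_1$; this is plausible since $h(t)\in\mathcal{D}_2$ and $h$ is smooth, and if needed I would add it as a mild regularity hypothesis. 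This gives $\tau\cdot\tau\cdot C\tau=O(\tau^3)$.

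Finally, I solve the recursion as in Theorem~\ref{thm:serk1}. Assumption~\ref{assumption:contraction} gives $\lVert S^j\rVert\le1$, so
\[
\lVert\epsilon_n\rVert\le C\tau^2+C\tau\sum_{j<n}\lVert\epsilon_j\rVert,
\]
and the discrete Gronwall lemma yields the bound $\lVert\epsilon_n\rVert\le C\tau^2$.
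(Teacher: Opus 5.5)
Your overall strategy is sound and is essentially the paper's. Comparing with the exact unsplit ETD2RK step $\hat u_{n+1}$ only regroups the paper's defects $\delta_{n+1,1},\dots,\delta_{n+1,8}$. Your key move in (iii), splitting $h(t_{n+1})-h(t_n)$ off from $g(\hat U_{n2})-h(t_n)$, produces exactly the paper's term $\delta_{n+1,7}=\tau\bigl(2\varphi_2(\tau A)\varphi_2(\tau B)-\varphi_2(\tau L)\bigr)(h(t_{n+1})-h(t_n))$.

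The one step that does not prove the theorem as stated is how you bound this term. You apply Proposition~\ref{prop:splitorder}(1) to $\int_0^\tau h'(t_n+s)\,ds$. That requires $h'(t)\in\mathcal{D}_1$ with $\lVert Ah'(t)\rVert$ and $\lVert Bh'(t)\rVert$ uniformly bounded. Nothing of this kind is part of Assumption~\ref{assumption:order2}, which only places $h(t)$ itself in $\mathcal{D}_2$, so with your extra hypothesis you would be proving a weaker result.

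The hypothesis is also unnecessary, because you do not need the increment $h(t_{n+1})-h(t_n)$ to be small. Both $h(t_{n+1})$ and $h(t_n)$ lie in $\mathcal{D}_2$, so Proposition~\ref{prop:splitorder}(2) with $\ell=2$ gives $\lVert(\varphi_2(\tau L)-2\varphi_2(\tau A)\varphi_2(\tau B))h(t_k)\rVert\le C\tau^2$ for $k=n,n+1$. The prefactor $\tau$ then yields $C\tau^3$ directly. This is how the paper bounds $\delta_{n+1,7}$; it simply lists it among the splitting errors.

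With this replacement, the rest of your argument goes through under exactly the stated assumptions. That includes the $O(\tau^3)$ local error of the unsplit step, terms (i), (ii) and (iv), and the Gronwall step.
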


\begin{proof}
  We start the proof by considering the following Taylor expansion of
  $h(t)=g(u(t))$:
\begin{equation}\label{eq:h2}
    h(t_n+s) = h(t_n)+sh'(t_n)+\int_0^s (s-\sigma)h''(t_n+\sigma)d\sigma.
\end{equation}
Then, we insert it in the solution $u(t_{n+1})$ written in terms of
variation-of-constants formula~\eqref{eq:voc}.
Then, we take the difference between the intermediate stage $U_{n2}\approx 
u(t_{n+1})$ and the exact solution, obtaining 
\begin{align*}
    U_{n2}-u(t_{n+1})
    &=\rme^{\frac{\tau}{2}A}\rme^{\tau B}\rme^{\frac{\tau}{2}A}u_n
    -\rme^{\tau L}u(t_n)+\tau\varphi_1(\tau A)\varphi_1(\tau B)g(u_n)
    -\tau\varphi_1(\tau L)h(t_n) \\
    &\quad-\tau^2\varphi_2(\tau L)h'(t_n)
    -\int_0^\tau \rme^{(\tau-s)L}\int_0^s (s-\sigma)h''(t_n+\sigma)d\sigma ds.
\end{align*}
By defining $\epsilon_n = u_n-u(t_n)$, the last expression can be rewritten as
\begin{equation}\label{eq:diffn2}
    U_{n2}-u(t_{n+1})=\rme^{\frac{\tau}{2}A}\rme^{\tau B}\rme^{\frac{\tau}{2}A}
    \epsilon_n+\sum_{k=1}^4 \delta_{n+1,k}-\tau^2\varphi_2(\tau L)h'(t_n),
\end{equation}
where
\begin{align*}
    \delta_{n+1,1} &= \left(\rme^{\frac{\tau}{2}A}\rme^{\tau B}\rme^{\frac{\tau}{2}A}
    -\rme^{\tau L}\right)u(t_n), \\
    \delta_{n+1,2}&=\tau\varphi_1(\tau A)\varphi_1(\tau B)(g(u_n)-h(t_{n})), \\
    \delta_{n+1,3}&=\tau(\varphi_1(\tau A)\varphi_1(\tau B)
    -\varphi_1(\tau L))h(t_n), \\
    \delta_{n+1,4} &= -\int_0^\tau \rme^{(\tau-s)L}\int_0^s (s-\sigma)
    h''(t_n+\sigma)d\sigma ds.
\end{align*}
The last term in the right-hand side of formula~\eqref{eq:diffn2}
can be rewritten thanks to the identity
\begin{equation*}
-\tau h'(t_n)=
  -(h(t_{n+1})-h(t_n))
    +\int_0^\tau (\tau-\sigma)h''(t_n+\sigma) d\sigma,
\end{equation*}
which follows from \eqref{eq:h2} taking $s=\tau$.
In addition, we consider $u_{n+1}-u(t_{n+1})$, and insert the definition of
$u_{n+1}$, thus obtaining
\begin{equation*}
    \epsilon_{n+1}= U_{n2}-u(t_{n+1})+2\tau\varphi_2(\tau A)
    \varphi_2(\tau B)(g(U_{n2})-g(u_n)),
\end{equation*}
that is
\begin{equation}\label{eq:rec2}
\epsilon_{n+1}=
  \rme^{\frac{\tau}{2}A}\rme^{\tau B}\rme^{\frac{\tau}{2}A}\epsilon_n
+\delta_{n+1},\quad
\delta_{n+1}=\sum_{k=1}^8\delta_{n+1,k},
\end{equation}
where
\begin{align*}
    \delta_{n+1,5}&=2\tau\varphi_2(\tau A)\varphi_2(\tau B)(g(U_{n2})
    -h(t_{n+1})),\\
    \delta_{n+1,6} &= 2\tau\varphi_2(\tau A)\varphi_2(\tau B)(h(t_n)-g(u_n)),\\
    \delta_{n+1,7} &= \tau(2\varphi_2(\tau A)\varphi_2(\tau B)-\varphi_2(\tau L))
    (h(t_{n+1})-h(t_n)),\\
    \delta_{n+1,8} &= \tau\varphi_2(\tau L)\int_{0}^\tau (\tau-\sigma)
    h''(t_n+\sigma)d\sigma.
\end{align*}
Then, solving the 
recursion~\eqref{eq:rec2} with $\epsilon_0=0$, we get
\begin{equation*}
    \epsilon_n 
    = \sum_{j=0}^{n-1} \left(\rme^{\frac{\tau}{2}A}\rme^{\tau B}
    \rme^{\frac{\tau}{2}A}\right)^{n-j-1} \delta_{j+1}
    = \sum_{j=0}^{n-1} \left(\rme^{\frac{\tau}{2}A}\rme^{\tau B}
    \rme^{\frac{\tau}{2}A}\right)^j \delta_{n-j}
\end{equation*}
and, under Assumption \ref{assumption:contraction}, it follows that
\begin{equation*}
  \lVert\epsilon_n\rVert \leq \sum_{j=0}^{n-1}\left\lVert\rme^{\frac{\tau}{2} A}
  \right\rVert^j\left\lVert
    \rme^{\tau B}\right\rVert^j\left\lVert\rme^{\frac{\tau}{2} A}\right\rVert^j \lVert\delta_{n-j}\rVert 
    \leq \sum_{j=0}^{n-1}\sum_{k=1}^8 \lVert\delta_{j+1,k}\rVert.
\end{equation*}
From Assumption \ref{assumption:order2} and Proposition 
\ref{prop:splitorderstrang},
we have that $\lVert\delta_{j+1,1}\rVert$, $\lVert\delta_{j+1,3}\rVert$ and 
$\lVert\delta_{j+1,7}\rVert$ are bounded by $C\tau^3$, since they are 
splitting errors. Moreover, it follows from the boundedness of the
derivatives of
$g$ that $\lVert\delta_{j+1,4}\rVert$ and $\lVert\delta_{j+1,8}\rVert$
are bounded by $C\tau^3$. Finally, since $g$ is
Lipschitz, $\lVert\delta_{j+1,2}\rVert$ and $\lVert\delta_{j+1,6}\rVert$
are bounded by 
$C\tau\lVert\epsilon_{j}\rVert$, and, using in addition \eqref{eq:diffn2},
we get
$\lVert\delta_{j+1,5}\rVert\le C\tau\lVert\epsilon_j\rVert+C\tau^3$.
Thanks to these inequalities, we can apply a
discrete Gronwall lemma to
\begin{equation*}
    \lVert\epsilon_n\rVert\leq C\tau^2+C\tau\sum_{j=1}^{n-1}\lVert
    \epsilon_{j}\rVert,
\end{equation*}
and conclude.
\end{proof}

\section{Numerical examples}\label{sec:examples}

In this section, we apply the split integrators ERK1L~\eqref{eq:erk1l}
and ERK2L~\eqref{eq:erk2l} to solve
two PDEs containing noncommuting operators. In the first example, we validate 
the theoretical results by applying the two
schemes to a dispersive problem with a manufactured exact solution,
employing a truncated Fourier expansion for the implementation.
In the second example, we solve a physically relevant
diffusion--reaction equation, demonstrating
the efficiency of the approach compared to the unsplit counterpart.
In this case, the PDE is numerically solved by
applying the so-called method of lines with second-order finite
differences, converting the unbounded spatial
differential operators into matrices with arbitrary large norm.
Then, the proposed time-marching schemes are applied to the arising system
of ODEs.
\subsection{Nonlinear dispersive equation}
\label{ssec:drncc}
\begin{figure}[ht]
\centering
%
%
\definecolor{mycolor1}{rgb}{0.00000,0.44706,0.69804}%
\definecolor{mycolor2}{rgb}{0.90196,0.62353,0.00000}%
\definecolor{mycolor3}{rgb}{0.12941,0.12941,0.12941}%
\begin{tikzpicture}

\begin{axis}[%
width=1.535in,
height=1.535in,
at={(0.706in,0.632in)},
scale only axis,
xmode=log,
xmin=0.001,
xmax=0.1,
xminorticks=true,
xlabel style={font=\color{mycolor3}},
xlabel={$\tau$},
ymode=log,
ymin=1e-06,
ymax=0.1,
yminorticks=true,
ylabel style={font=\color{mycolor3}},
ylabel={Relative error},
axis background/.style={fill=white},
legend style={at={(0.97,0.03)}, font=\footnotesize, anchor=south east, legend
cell
align=left, align=left}
]
\addplot [color=mycolor1, line width=1.2pt, only marks, mark size=3.0pt, mark=o,
mark options={solid, mycolor1}]
  table[row sep=crcr]{%
0.03125	0.0327529856858712\\
0.0178571428571429	0.0188116440183811\\
0.01	0.0105671674640103\\
0.00561797752808989	0.00594703786080898\\
0.00316455696202532	0.00335323446580755\\
};
\addlegendentry{ERK1L}

\addplot [color=mycolor2, line width=1.2pt, only marks, mark size=3.0pt,
mark=square, mark options={solid, mycolor2}]
  table[row sep=crcr]{%
0.03125	0.000753811106569102\\
0.0178571428571429	0.000245383654163059\\
0.01	7.68131556956047e-05\\
0.00561797752808989	2.42190945424106e-05\\
0.00316455696202532	7.68031512036317e-06\\
};
\addlegendentry{ERK2L}

\addplot [color=black, densely dotted, line width=1.2pt, forget plot]
  table[row sep=crcr]{%
0.03125	0.0331131903498496\\
0.0178571428571429	0.0189218230570569\\
0.01	0.0105962209119519\\
0.00561797752808989	0.00595293309660217\\
0.00316455696202532	0.00335323446580755\\
};
\addplot [color=black, densely dashdotted, line width=1.2pt, forget plot]
  table[row sep=crcr]{%
0.03125	0.000748950729159165\\
0.0178571428571429	0.000244555340133605\\
0.01	7.66925546658985e-05\\
0.00561797752808989	2.42054521733047e-05\\
0.00316455696202532	7.68031512036317e-06\\
};
\end{axis}

\end{tikzpicture}%
\caption{Relative errors in the
  $L^2$-norm of the numerical solutions of 
\eqref{eq:experiodic} obtained applying the schemes ERK1L and ERK2L with 
different time steps $\tau$, ranging from $1/316$ to $1/32$. 
The spatial discretization is done with 300 Fourier modes for each
direction,
and the final time of the simulation is $T=1$. 
The dotted reference line has slope $1$, while the dash-dotted one has slope
$2$.} 
\label{fig:ex_periodic_order}
\end{figure}
We consider the PDE 
\begin{equation}\label{eq:experiodic}
    \partial_t u = \rmi(\partial_{xx}u+b(x)\partial_{yy}u)+g(t,x,y,u)
    \quad \text{on $\Omega$},
\end{equation}
coupled with periodic boundary conditions,
where $\Omega=(0,2\pi)^2$ and $b(x)=2+\sin(x)$ is a spatially variable
coefficient. 
The nonlinear reaction term is
\begin{equation*}
    g(t,x,y,u)=\frac{1}{1+u^2}+s(t,x,y),
\end{equation*}
where the source term $s(t,x,y)$ and the initial condition
are such that the exact solution of the problem is 
\begin{equation*}
    u(t,x,y) = \rme^{\rmi t} \sin(x)\cos(y).
\end{equation*}
We apply the schemes ERK1L and ERK2L to solve the problem up to the final time
$T=1$, choosing the operators $A=\rmi\partial_{xx}$ and 
$B=\rmi b(x)\partial_{yy}$ to split the spatial differential term
in~\eqref{eq:experiodic}.
The assumptions in Theorems~\ref{thm:serk1} and \ref{thm:serk2} are satisfied
with $X=L^2(\Omega)$ and the $L^2$-norm. In fact, 
given the Fourier series of a function $v$
\begin{equation*}
    v(x,y) = \sum_{j,k=-\infty}^{+\infty} \hat{v}_{jk}\rme^{\rmi(jx+ky)}
\end{equation*}
we can define
\begin{equation*}
    (\rme^{tA}v)(x,y)= 
    \sum_{j,k=-\infty}^{+\infty} \rme^{-t\rmi j^2}\hat{v}_{jk}
    \rme^{\rmi(jx+ky)},
    \quad     (\rme^{tB}v)(x,y) 
    = \sum_{j,k=-\infty}^{+\infty}\rme^{-t\rmi k^2b(x)}
    \hat{v}_{jk} \rme^{\rmi (jx+ky)}.
\end{equation*}
Clearly, these operators have the properties of a strongly
continuous semigroup. Moreover,
$\lVert\rme^{tA}v\rVert^2=
\lVert v\rVert^2$, hence $\lVert \rme^{tA}\rVert=1$ and similarly
for $\lVert \rme^{tB}\rVert$. So,
Assumptions~\ref{assumption:semigroup} and \ref{assumption:contraction}
are satisfied. Finally, $u$ and $g$ are smooth periodic functions, hence
Assumptions~\ref{assumption:order1} and \ref{assumption:order2} are
also satisfied. Therefore, we expect the two schemes
to converge with order one and two, respectively. The
numerical experiments confirm this behavior, as illustrated 
in Figure~\ref{fig:ex_periodic_order}. Concerning the actual implementation,
we truncated the above Fourier series to 300 modes for each direction, and
employed Fast Fourier Transform techniques.
\subsection{Enzyme kinetics of Michaelis--Menten type}
\label{ssec:michaelismenten}
In this second example, we consider the enzyme kinetics of Michaelis--Menten
type diffusion--reaction equation \cite{AAKW20,BK15}.
The unknown represents the
concentration of the substrate, while the nonlinear term describes the reaction
between the substrate and the enzyme. During the dynamics, the substrate is
progressively consumed and its concentration approaches the zero
stationary state. We choose a sector of a unit disk with a central 
angle of $\pi/3$ as the spatial domain of the problem.
The initial condition is set to the constant value one,
while homogeneous Dirichlet boundary conditions are imposed.
Given the geometry of the problem,
it appears natural to employ polar coordinates,
so that the spatial variables are
$(\rho,\theta)\in\Omega=(0,1)\times(-\pi/6,\pi/6)$
and the PDE is
\begin{equation}\label{eq:mm}
  \partial_t u = d\left(
\frac{1}{\rho}\frac{\partial}{\partial\rho}
    +\frac{\partial^2}{\partial \rho^2}
    +\frac{1}{\rho^2}\frac{\partial^2}{\partial\theta^2}\right)
  u -\frac{u}{1+u} \quad \text{on $\Omega$},
\end{equation}
where $d=0.2$ is the diffusion coefficient.
Numerical methods lacking favorable stability properties may produce
solutions of \eqref{eq:mm} that exhibit oscillations
(see \cite[Fig.~15]{BK15}),
due to the discontinuity between the
initial and boundary conditions.
We apply the method of lines and
use standard second-order finite differences for the spatial 
discretization. We denote by $n_\rho$ and $n_\theta$ the number of inner
points used to uniformly discretize the variables
$\rho$ and $\theta$, respectively, so that the corresponding
constant step sizes are $h_\rho=1/(n_\rho+1)$ and $h_\theta=\pi/(3
(n_\theta+1))$. The resulting matrices of size $N\times N$, with
$N=n_\rho n_\theta$, are
\begin{equation*}
  A=I_\theta \otimes M_\rho\approx
  \frac{1}{\rho}\frac{\partial}{\partial\rho}+\frac{\partial^2}
    {\partial \rho^2},\quad
    B=    M_\theta \otimes D_\rho  \approx
    \frac{1}{\rho^2}\frac{\partial^2}{\partial\theta^2},
\end{equation*} 
where $I_\theta$ is the identity matrix of size $n_\theta\times n_\theta$ and
$D_\rho$ is the 
diagonal matrix whose nonzero entries are $1/\rho_i^2$, where
$\rho_i=i h_\rho$, 
$i=1,\dots,n_\rho$. Moreover, $M_\theta$ is the classical symmetric tridiagonal matrix
of size $n_\theta\times n_\theta$ with stencil $[1,-2,1]/h_\theta^2$, while
$M_\rho$ is the tridiagonal matrix of size $n_\rho\times n_\rho$ with
constant diagonal elements $-2/h_\rho^2$, and
superdiagonal elements $b_i$ and
subdiagonal elements $c_i$ defined by
\begin{equation*}
  b_i=\left(1+\frac{1}{2i}\right)\frac{1}{h_\rho^2},\quad
  c_i=\left(1-\frac{1}{2(i+1)}\right)\frac{1}{h_\rho^2},\quad
  i=1,\ldots,n_\rho-1.
\end{equation*}
Finally, the symbol
$\otimes$ denotes the standard Kronecker product.

We notice that
$A$ and $B$ have nonnegative
extradiagonal elements, and hence
the matrix exponentials $\rme^{tA}$ and $\rme^{tB}$
(and also the matrices $\varphi_k(tA)$ and $\varphi_k(tB)$,
see~\eqref{eq:phifun}) are
nonnegative for all $t\ge 0$ \cite[Theorem I.7.2]{HV03}. Moreover,
we have
    \begin{equation*}
        \lVert \rme^{t A}\rVert_\infty \leq 1,\quad 
        \lVert \rme^{t B}\rVert_\infty 
        \leq 1,\quad t\geq 0.
    \end{equation*} 
    These bounds can be proved considering the logarithmic $\infty$-norm 
    \cite[Section I.2.3]{HV03}.
    Indeed,
    the sum of the elements of the first row of $M_\rho$ is negative,
    that of the last row
    is negative, while that of each remaining row is zero. Therefore,
        $\mu_\infty(I_\theta\otimes M_\rho)=\mu_\infty(M_\rho) = 0$, and hence
    \begin{equation*}
      \lVert \rme^{t A}\rVert_\infty
\leq         \rme^{t \mu_\infty(A)} = 1.
    \end{equation*}
    Concerning the matrix $B$, with arguments similar to the previous case,
    we again conclude that 
        $\mu_\infty(B)=0$,
    and hence $\lVert \rme^{tB}\rVert_\infty\le 1$.

    Now, we show the computational advantage of computing the action
    of matrix functions of
    $A$ and $B$ \emph{separately} as in the proposed split schemes.
    In fact, for the matrix $A$ and a generic vector $v\in\RR^{N}$    we have
    \begin{equation*}
      \varphi_\ell(\tau A)v=
      (I_\theta\otimes \varphi_\ell(\tau M_\rho))v=
      \mathrm{vec}(\varphi_\ell(\tau M_\rho)V),
    \end{equation*}
    where $\mathrm{vec}$ is the operator which stacks the columns of the input
    matrix into a vector, and $V\in\RR^{n_\rho\times n_\theta}$ is
    such that $v=\mathrm{vec}(V)$
    (see, for instance, \cite{VL00}). The computation of the ``small'' matrix
    $\varphi_\ell(\tau M_\rho)$ can be performed by diagonalization or
    Pad\'e or Taylor  approximations with modified scaling and squaring
    (see, for instance, \cite{H08}).
    Regarding the matrix $B$, we first diagonalize the small
    matrix $M_\theta$ to obtain
    $Q_\theta\Lambda_\theta Q_\theta^{\sf T}$. Then, by using the properties of
    the     Kronecker product, we get
    \begin{equation*}
      \varphi_\ell(\tau B)=
      \varphi_\ell(\tau(Q_\theta\otimes I_\rho)(\Lambda_\theta\otimes D_\rho)
      (Q_\theta^{\sf T}\otimes I_\rho))=
      (Q_\theta\otimes I_\rho)\varphi_\ell(\tau(\Lambda_\theta\otimes D_\rho))
      (Q_\theta^{\sf T}\otimes I_\rho),
    \end{equation*}
    where $I_\rho$ is the identity matrix of size $n_\rho \times n_\rho$.
    Therefore, we can compute the action on the vector $v$ as
    \begin{equation*}
      \varphi_\ell(\tau B)v=
      \mathrm{vec}((\Phi_{\rho\theta}\circ(VQ_\theta))Q_\theta^{\sf T}),
    \end{equation*}
    where $\Phi_{\rho \theta}$ is the matrix with element
    $\varphi_\ell(\tau\lambda_j/\rho_i^2)$
    in position $(i,j)$, being $\lambda_j$ the $j$th element
    of the diagonal matrix $\Lambda_\theta$, while the symbol $\circ$ denotes
    the standard Hadamard product. The advantage of such direct matrix-oriented
    (or tensor-oriented, for higher dimensional problems)
    approaches, which only involves the computation of small matrix
    functions, scalar quantities, and level 3 BLAS,
    has already been demonstrated in many
    instances (see, for instance, \cite{BS17,DASS20,CCEOZ22,CCZ23bis,CC27}).

\begin{figure}[ht]
\centering
  \includegraphics[scale=0.23]{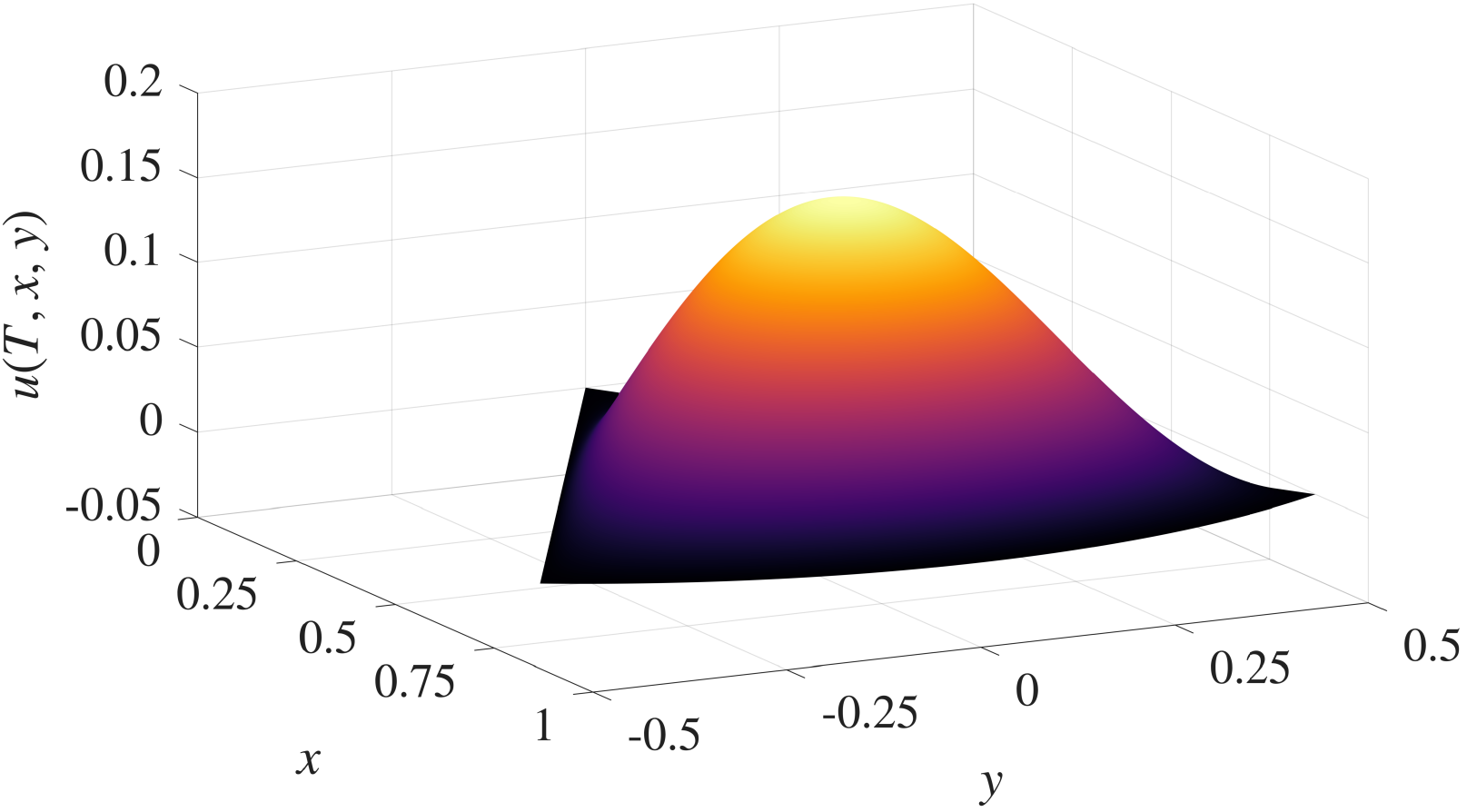}
  \includegraphics[scale=0.23]{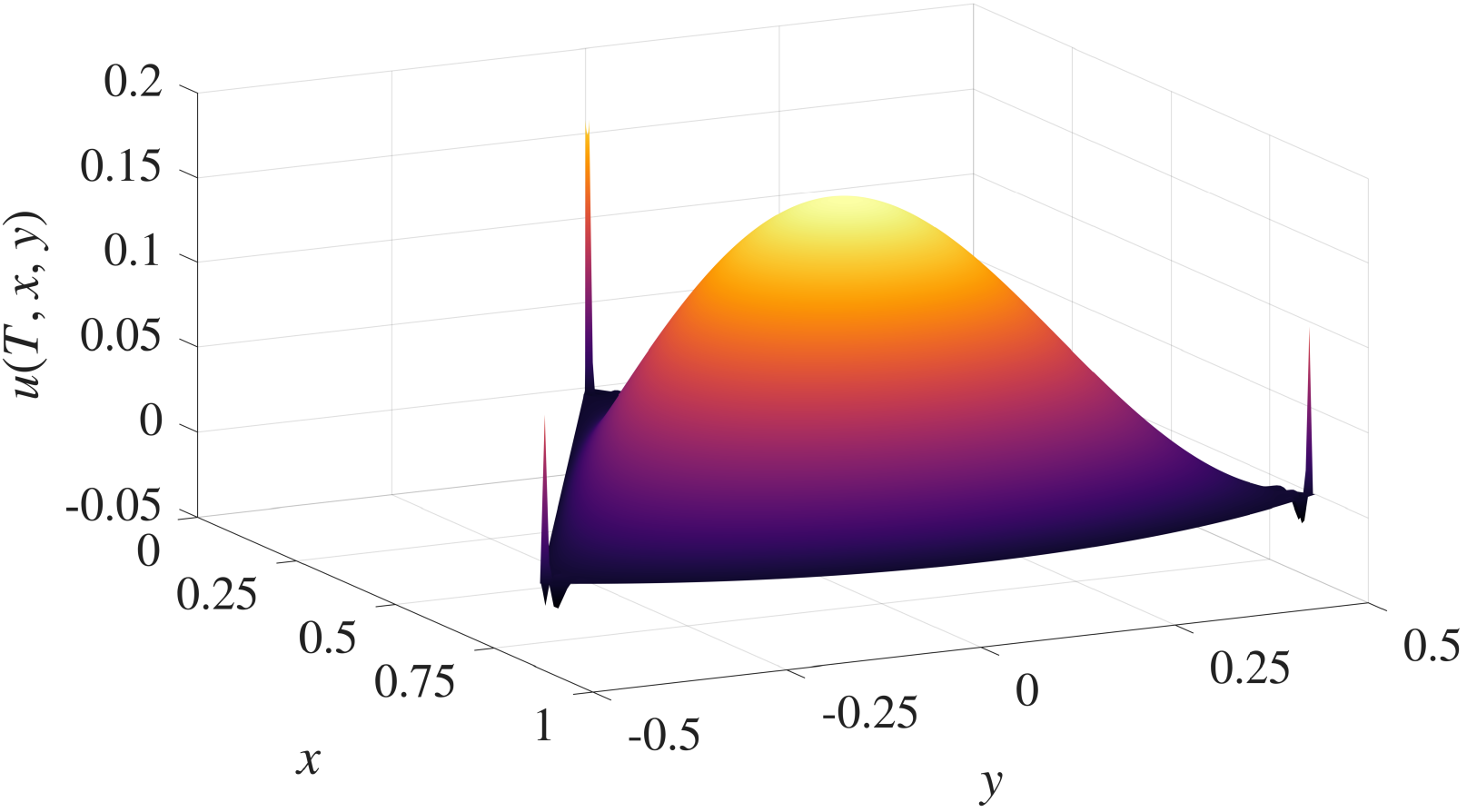}
\caption{Numerical solutions of \eqref{eq:mm} at $T=0.25$ with 
  $(n_\rho,n_\theta)=(200,200)$,
  obtained using ERK1L~\eqref{eq:erk1l} (left) and
  method~\eqref{eq:seef} (right) both with 40 time steps.} 
\label{fig:ex_mm_lstab}
\end{figure}

In the first numerical experiment, we solve \eqref{eq:mm} applying the two
different formulations of the split exponential Euler method described in
Section~\ref{ssec:serk1}, namely ERK1L~\eqref{eq:erk1l} and
method~\eqref{eq:seef}.
The final time of the simulation is $T=0.25$. The
number of points used for the spatial discretization in the two directions is 
$(n_\rho,n_\theta)=(200,200)$, and the number of time steps for both methods is 
40. The numerical solutions are presented in Figure~\ref{fig:ex_mm_lstab}. The
solution produced by method~\eqref{eq:seef} presents spurious
oscillations close to the domain corners, while ERK1L is able to damp
them as expected (see the discussion in Remark~\ref{rk:lstab}).

\begin{figure}[ht]
\centering
\includegraphics[height=3.8cm]{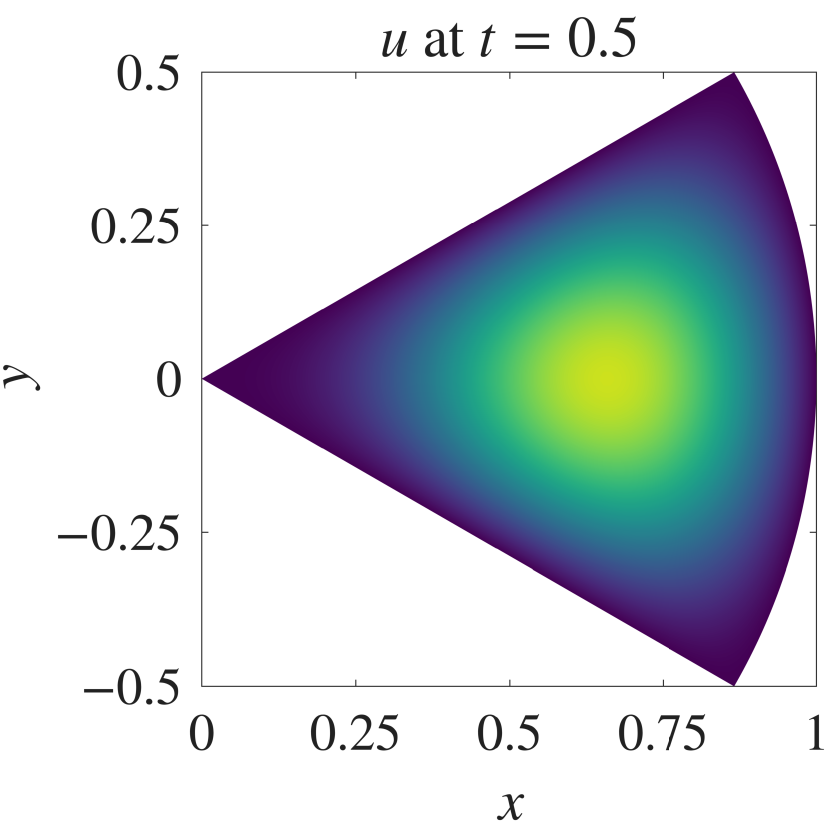}\hfill
\includegraphics[height=3.8cm]{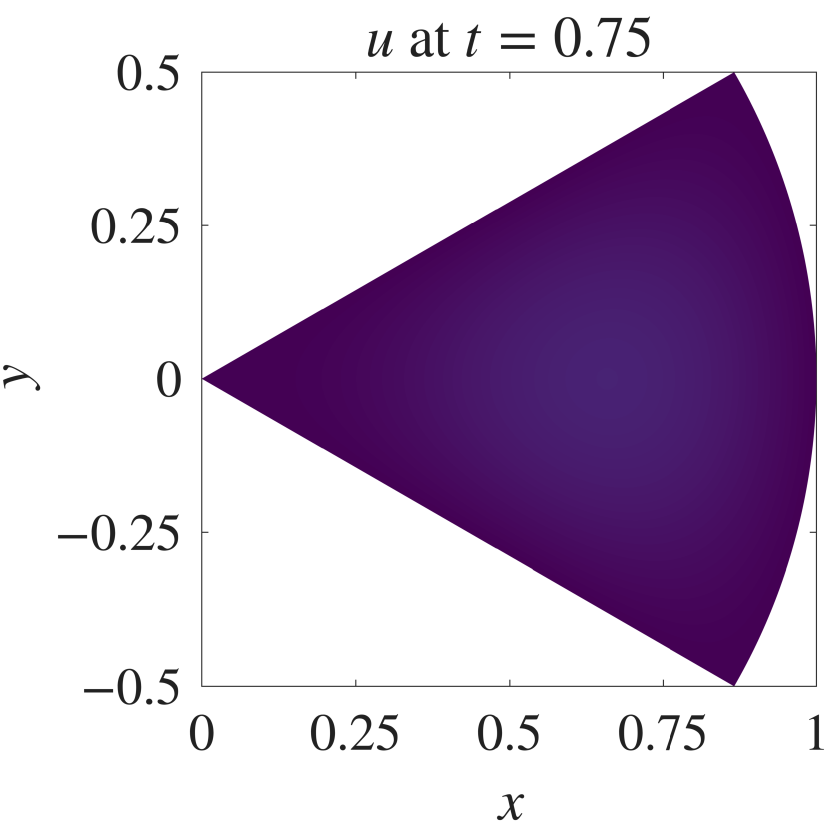}\hfill
\includegraphics[height=3.8cm]{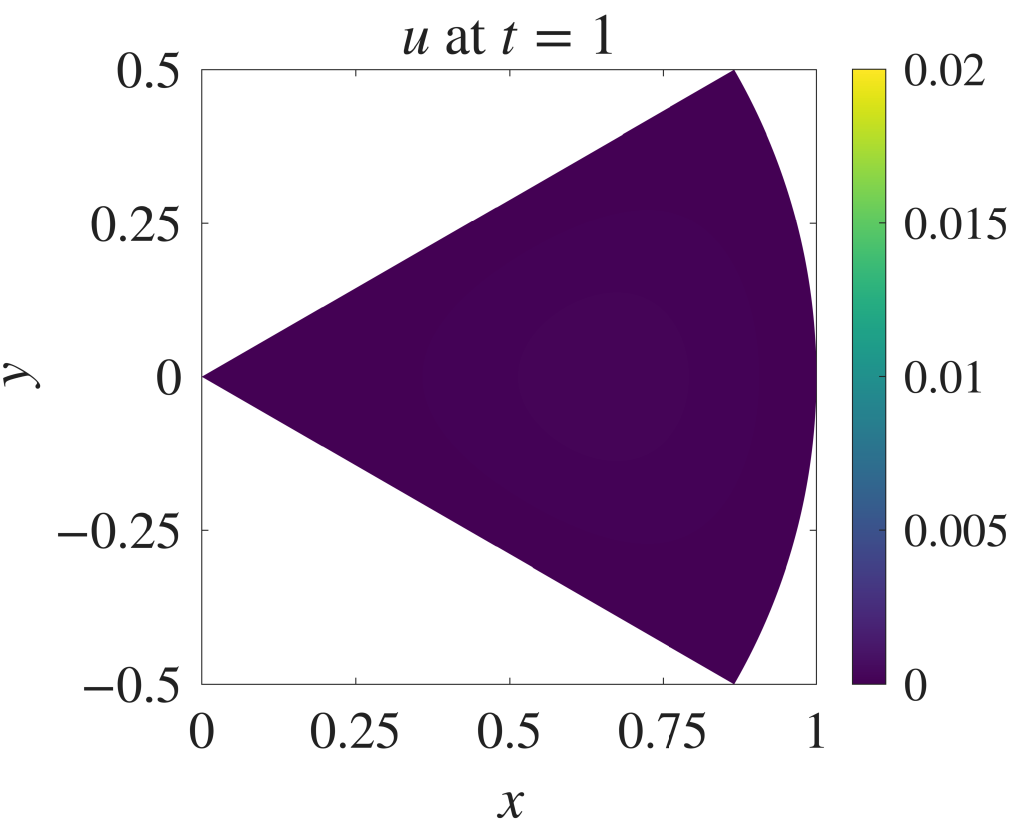}
    \caption{Numerical solution of \eqref{eq:mm} at three time instants with 
      $(n_\rho,n_\theta)=(100,100)$, obtained using ERK2L with time step size
      $\tau=1/3200$.}
    \label{fig:ex_mm_sol}
\end{figure}

In the second experiment, we compare the performances of the split exponential
integrators ERK1L~\eqref{eq:erk1l} and
ERK2L~\eqref{eq:erk2l}
with their unsplit counterparts ETD1RK~\eqref{eq:eeg} and 
ETD2RK~\eqref{eq:erk},
respectively. The linear combinations of actions of $\varphi_\ell(\tau L)$
matrix functions required by the unsplit methods,
where $L=A+B\in\RR^{N\times N}$ is large and sparse,
 are computed
employing the routine KIOPS \cite{GRT18}. This algorithm
is a state-of-the-art Krylov subspace iterative technique with incomplete
orthogonalization. The input tolerance required by KIOPS is set to $10^{-6}$
and 
$10^{-7}$ for ETD1RK and ETD2RK, respectively.
Notice that other Krylov-based relevant techniques to efficiently
compute actions of
exponential-like matrix functions are available in the literature
(see, for instance, \cite{LPR19,CCZ23}). The final time of the simulations
is now set to $T=1$ and for the spatial discretization we
take $(n_\rho,n_\theta)=(100,100)$.
In Figure~\ref{fig:ex_mm_sol}, we plot the solution $u$ 
obtained using ERK2L at three time instants
and we see that, as anticipated, it approaches the zero
stationary state.

\begin{figure}[ht]
\centering
%
%
\definecolor{mycolor1}{rgb}{0.00000,0.44706,0.69804}%
\definecolor{mycolor2}{rgb}{0.90196,0.62353,0.00000}%
\definecolor{mycolor3}{rgb}{0.12941,0.12941,0.12941}%
\begin{tikzpicture}

\begin{axis}[%
width=3.45in,
height=2.8in,
at={(0.81in,0.722in)},
scale only axis,
xmode=log,
xmin=0.0316227766016838,
xmax=316.227766016838,
xminorticks=true,
xlabel style={font=\color{mycolor3}},
xlabel={Wall-clock time (s)},
ymode=log,
ymin=8e-06,
ymax=0.005,
yminorticks=true,
ylabel style={font=\color{mycolor3}},
ylabel={Relative error},
axis background/.style={fill=white},
legend style={at={(0.52,0.97)}, font=\footnotesize, anchor=north, legend cell
align=left, align=left},
legend image post style={scale=0.87}
]

\addplot [color=mycolor1, dashed, line width=1.0pt, mark size=2.6pt, mark=o,
mark options={solid, mycolor1},
nodes near coords, every node near coord/.append style={
text=mycolor1, font=\footnotesize, anchor=west, xshift=3pt}]
  table[row sep=crcr,
  x index=0, y index=1, meta index=2, point meta=explicit symbolic]{%
0.131032	0.00357092411650545  1300\\
0.182649	0.00244521471420156  1900\\
0.234356	0.0018591556981143  2500\\
0.306175	0.00149972311568235  3100\\
0.368974  0.00125676165296173 3700\\
};
\addlegendentry{ERK1L}

\addplot [color=mycolor1, line width=1.0pt, mark size=3.0pt, mark=triangle, mark
options={solid, mycolor1},
nodes near coords, every node near coord/.append style={
text=mycolor1, font=\footnotesize, anchor=east, xshift=-3pt}]
  table[row sep=crcr,
  x index=0, y index=1, meta index=2, point meta=explicit symbolic]{%
51.989902	0.00357122380930036 1100\\
54.566743	0.0024580607963506 1600\\
55.89723	0.00186543431529149 2100\\
57.882044	0.00150000839442413 2600\\
60.163685 0.00125000993012023 3100\\
};
\addlegendentry{ETD1RK}

\addplot [color=mycolor2, dashed, line width=1.0pt, mark size=2.6pt,
mark=square, mark options={solid, mycolor2},
nodes near coords, every node near coord/.append style={
text=mycolor2, font=\footnotesize, anchor=west, xshift=3pt}]
  table[row sep=crcr,
  x index=0, y index=1, meta index=2, point meta=explicit symbolic]{%
0.208857	6.66579179120926e-05  1100\\
0.297708	3.28908660394087e-05  1800\\
0.381047	2.05831257843591e-05  2500\\
0.50801	1.53823447228052e-05  3200\\
0.612119  1.19504748573839e-05 3900\\
};
\addlegendentry{ERK2L}

\addplot [color=mycolor2, line width=1.0pt, mark size=3.6pt, mark=diamond, mark
options={solid, mycolor2},
nodes near coords, every node near coord/.append style={
text=mycolor2, font=\footnotesize, anchor=east, xshift=-3pt}]
  table[row sep=crcr,
  x index=0, y index=1, meta index=2, point meta=explicit symbolic]{%
84.989727	9.09343233028258e-05  200\\
76.750751	4.1307038259738e-05  300\\
75.611367	2.38807342946966e-05  400\\
74.264829	1.57759250341315e-05  500\\
77.430093 1.1295559638611e-05 600\\
};
\addlegendentry{ETD2RK}

\end{axis}

\end{tikzpicture}%
\caption{Wall-clock times and relative errors in the infinity norm for 
problem~\eqref{eq:mm} at $T=1$ with $(n_\rho,n_\theta)=(100,100)$ using the
first-order methods ERK1L, ETD1RK and the second-order ones ERK2L, ETD2RK. 
The number of time steps employed in each simulation is
reported in the figure.} 
\label{fig:ex_mm_comparison}
\end{figure}

Then, we
compare the methods in terms of  wall-clock time needed to attain different
levels of accuracy.
The experiments are performed on a laptop equipped with an 
Intel\textsuperscript{\textregistered}\ Core\texttrademark\ i7-12700H CPU 
(14 physical cores) and 16GB of RAM using the software MathWorks 
\textsc{Matlab}\textsuperscript{\textregistered}\ 2025b.
The results are presented in Figure~\ref{fig:ex_mm_comparison}.
Despite the higher number of time steps required by the split schemes
to balance the introduced splitting error, the matrix-oriented
approach allows to outperform the unsplit methods in all cases, with a
speedup of about two orders of magnitude.

\section{Conclusions}\label{sec:conclusions}
In this paper, we have investigated the theoretical and practical
advantages of splitting exponential and exponential-like functions
in the context of exponential integrators for semilinear
evolution equations. For two noncommuting unbounded operators,
we established new elementary error representations that
include the classical Lie--Trotter and Strang
splittings. Operating within the abstract framework of
strongly continuous semigroups, we provided rigorous
convergence proofs for split variants of both the first-order
exponential Euler method and a second-order exponential Runge--Kutta scheme.
Numerical experiments on nonlinear dispersive and diffusion--reaction
equations fully validated our theoretical results, showing
significant computational savings compared to
standard unsplit state-of-the-art methods.
Natural directions for future research include extending this semigroup error
analysis to higher-order exponential Runge--Kutta methods and
multi-term operator splittings.

\backmatter
\bmhead{Acknowledgements}
The authors are members of the Gruppo Nazionale Calcolo Scientifico-Istituto
Nazionale di Alta Matematica (GNCS-INdAM).
\bibliography{ncsplit.bib}

\end{document}